\newtheorem{sat}{Theorem}[section]		\newtheorem{lem}[sat]{Lemma}
\newtheorem{kor}[sat]{Corollary}			\newtheorem{prop}[sat]{Proposition}
				\newtheorem{defi}{Definition}
\newtheorem*{defi*}{Definition}			\newtheorem*{bei*}{Example}
\newtheorem*{sat*}{Theorem}				\newtheorem*{kor*}{Corollary}
\newtheorem*{rmk*}{Remark}					
\newtheorem{fact}[sat]{Fact}	
\let\ssection=\section
\renewcommand{\section}{\setcounter{equation}{0}\ssection}
\newtheorem*{namedtheorem}{\theoremname}
\newcommand{\theoremname}{testing}
\newenvironment{named}[1]{\renewcommand{\theoremname}{#1}\begin{namedtheorem}}{\end{namedtheorem}}
\theoremstyle{remark}
\newcommand{\BC}{\mathbb C}			\newcommand{\BH}{\mathbb H}
\newcommand{\BR}{\mathbb R}			
\newcommand{\BS}{\mathbb S}			\newcommand{\BZ}{\mathbb Z}
\newcommand{\BF}{\mathbb F}
\newcommand{\CE}{\mathcal E}
\newcommand{\CO}{\mathcal O}
		\newcommand{\CV}{\mathcal V}
\newcommand{\D}{\partial}
\newcommand{\into}{\hookrightarrow}
\newcommand{\DD}{\nabla}
\DeclareMathOperator{\PSL}{PSL}		
\DeclareMathOperator{\Isom}{Isom}	
\DeclareMathOperator{\vol}{vol}		
\DeclareMathOperator{\inj}{inj}
\DeclareMathOperator{\diam}{diam}
\DeclareMathOperator{\rank}{rank}
\DeclareMathOperator{\length}{length}
\DeclareMathOperator{\diver}{div}
\newcommand{\comment}[1]{}
\DeclareMathOperator{\Fr}{Fr}
\DeclareMathOperator{\dist}{dist}
\DeclareMathOperator{\area}{area}
\begin{document}

\title[]{A finiteness theorem for hyperbolic 3-manifolds}
\author{Ian Biringer \& Juan Souto}
\thanks{The first author has been partially supported by NSF Postdoctoral Research Fellowship DMS-0902991. The second author has been partially supported by the NSF grant DMS-0706878 and the Alfred P. Sloan Foundation.}
\begin{abstract}
We prove that there are only finitely many closed hyperbolic 3-manifolds with injectivity radius and first eigenvalue of the Laplacian bounded below whose fundamental groups can be generated by a given number of elements.  Our techniques also have particular application to arithmetic manifolds.

\vspace{3mm}

\noindent \scriptsize{MATHEMATICS SUBJECT CLASSIFICATION:}  \small 57M50
\end{abstract}

\maketitle

\section{Introduction}

A common pursuit in differential geometry is to bound the number of closed $n $-manifolds that admit a Riemannian metric with controlled geometry: for instance, one might specify constraints on diameter, curvature, volume or injectivity radius \cite {Cheegerfiniteness}, \cite {GPW}.  If one only considers locally symmetric metrics, these finiteness theorems combine with Mostow's rigidity theorem to yield much stronger conclusions.   As an example, Wang's finiteness theorem \cite{Wang} asserts that for $n\ge 4$ and $V >0 $, there are finitely many isometry classes of hyperbolic $n$-manifolds $M$ with volume at most $V$.  Wang's theorem still holds when $n = 3 $ if in addition to the volume bound, one only considers manifolds $M$ with  injectivity radius $\inj(M)\ge\epsilon>0$ \cite [Theorem E.2.4]{Benedetti-Petronio}. 

Our goal is to provide a finiteness result for hyperbolic 3-manifolds with constrained injectivity radius, first eigenvalue of the Laplacian and rank of the fundamental group. Here, the \it rank \rm of a group is the minimal number of elements needed to generate it.  We prove:

\begin{sat}\label{thm:main}
For every $\epsilon,\delta,k>0$, there are only finitely many isometry classes of closed hyperbolic 3-manifolds $M$ with injectivity radius $\inj(M)\ge\epsilon$, first eigenvalue of the Laplacian $\lambda_1(M)\ge\delta$ and $\rank (\pi_1(M)) \leq k$.
\end{sat}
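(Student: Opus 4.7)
The overall strategy is to reduce the theorem to a uniform upper bound on $\vol(M)$ in terms of $\epsilon,\delta,k$, and then to invoke the version of Wang's theorem that applies in dimension 3 once an injectivity radius bound is present (Theorem E.2.4 of Benedetti--Petronio). So I would argue by contradiction: suppose $M_i$ is a sequence of closed hyperbolic 3-manifolds satisfying the hypotheses with $\vol(M_i)\to\infty$. Using that $\rank(\pi_1(M_i))\le k$ together with the injectivity lower bound, a minimal carrier graph argument (after White and Souto) produces basepoints $p_i\in M_i$ such that $\pi_1(M_i,p_i)$ is generated by $\le k$ loops of length uniformly bounded in terms of $\epsilon$ and $k$.

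By Cheeger--Gromov compactness, a subsequence of $(M_i,p_i)$ converges geometrically to a pointed complete hyperbolic 3-manifold $(M_\infty,p_\infty)$ with $\inj(M_\infty)\ge\epsilon$. The algebraic limit of the given generating sets provides a faithful discrete representation of a rank-$\le k$ free group whose image $\Gamma\subset\Isom(\BH^3)$ yields a quotient $N_A=\BH^3/\Gamma$ that covers $M_\infty$. Since $\vol(M_i)\to\infty$, the manifold $N_A$ has infinite volume, and the tameness theorem of Agol and Calegari--Gabai ensures $N_A$ is topologically tame.

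The next step brings in the spectral hypothesis. I would show that the bottom of the $L^2$-spectrum satisfies $\lambda_0(N_A)\ge\delta$. Indeed, algebraic convergence gives, for each compact $K\subset N_A$ and large $i$, a nearly isometric embedding $K\hookrightarrow M_i$; a compactly supported test function on $N_A$ with Rayleigh quotient below $\delta$ can therefore be transplanted to $M_i$ and, after projecting out the constants, contradict $\lambda_1(M_i)\ge\delta$. By Canary's theorem relating degenerate ends of tame hyperbolic 3-manifolds to the bottom of their spectrum, the inequality $\lambda_0(N_A)\ge\delta>0$ forces every end of $N_A$ to be geometrically finite. The injectivity bound on the $M_i$ further excludes cusps in the algebraic limit (a cusp would force arbitrarily short geodesics in the approximants), so $N_A$ is in fact convex cocompact, and its convex core has volume bounded in terms of $k$.

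The main obstacle I anticipate is the final step: turning convex cocompactness of $N_A$ into an actual volume bound on the closed $M_i$. Intuitively, one must rule out that the $M_i$ are obtained from the convex core of $N_A$ by capping off its ends with arbitrarily complicated closed ``fillings''. Here the spectral bound $\lambda_1(M_i)\ge\delta$ on the entire closed manifold (not merely $\lambda_0\ge\delta$ on the limit) must be used: a topologically rich or geometrically long filling would produce either a Cheeger constant approaching zero or a small nonzero eigenvalue on $M_i$, violating the spectral hypothesis. Making this quantitative, probably via a Cheeger/Buser inequality on $M_i$ combined with the rank constraint to control the topology of the filling, is the heart of the argument.
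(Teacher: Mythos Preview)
The central error is in your second step. The carrier graph argument of White and its refinements do \emph{not} produce generators of $\pi_1(M_i)$ of length bounded solely in terms of $\epsilon$ and $k$. Indeed this fails already for the cyclic covers $M_n$ of a closed hyperbolic $3$-manifold fibering over $\BS^1$ with fiber $\Sigma_g$: these satisfy $\rank(\pi_1 M_n)\le 2g+1$ and $\inj(M_n)\ge\inj(M)$, but any generating set for $\pi_1(M_n)$ must contain an element of length growing with $n$. What the method actually gives (Proposition~\ref{prop:chains} and Lemma~\ref{prop:graphs} in the paper) is only a connected \emph{subgraph} $Y_i$ of the minimal carrier graph, with $\length(Y_i)\le L(\epsilon,k)$, carrying the additional feature that the convex core of the cover of $M_i$ corresponding to $(f_i)_*\pi_1(Y_i)$ has diameter tending to infinity. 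The image subgroup is in general a proper subgroup of $\pi_1(M_i)$.

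This reverses the direction of the rest of your argument. With the correct carrier graph output, the algebraic limit $N_A=\BH^3/\rho_\infty(\BF_m)$ arises from these subgroup representations, and the crucial input is precisely that $\diam CC(\BH^3/\rho_i(\BF_m))\to\infty$. Marden's stability theorem then shows that $N_A$ \emph{cannot} be convex cocompact; together with tameness and the absence of cusps, it therefore has a degenerate end. The Thurston--Canary covering theorem transfers this end to the geometric limit $M_\infty$, and a Cheeger--Buser transplantation from $M_\infty$ into the $M_i$ gives $\lambda_1(M_i)\to 0$, contradicting the hypothesis. This is exactly the paper's route (Proposition~\ref{lem:degenerate} followed by Proposition~\ref{prop:lambda-to-0}). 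Your steps 4--6 aim for the opposite conclusion about $N_A$ and then try to squeeze out a volume bound; the obstacle you anticipate in step 6 is a symptom of this reversed orientation rather than the genuine difficulty. Incidentally, had your step 2 been correct, step 6 would have been unnecessary anyway: Marden's stability applied to a convex cocompact $\rho_\infty$ with surjective approximants $\rho_i:\BF_k\twoheadrightarrow\Gamma_i$ would force $\diam(M_i)=\diam CC(M_i)$ to remain bounded, directly contradicting $\vol(M_i)\to\infty$.
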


It is not hard to see that Theorem \ref {thm:main} fails if any of the three constraints are dropped.  First, any closed hyperbolic $3$-manifold given by a fibration $\Sigma_g \to M \to \BS^ 1 $ has an infinite sequence of cyclic covers with $\inj (M_i) \geq \inj (M) $ and $\rank(\pi_1 M_i) \leq 2g + 1 $.   Second, the congruence covers of any closed arithmetic hyperbolic $3$-manifold have injectivity radius bounded below and $\lambda_1 \geq \frac 34$, by Burger-Sarnak \cite {Burger-Sarnak}.  Finally, applying Thurston's Dehn filling theorem \cite {Benedetti-Petronio} to any noncompact finite volume hyperbolic $3$-manifold $N $ gives an infinite sequence $(M_i) $ of closed hyperbolic $3$-manifolds with $\rank(\pi_1 M_i) \leq \rank (\pi_1 N)$ and $\lambda_1 (M_i) \ge \delta > 0 $.  To obtain the bound on $\lambda_1 (M_i) $, one uses Thurston's theorem to arrange that $M_i \to N $ in the Gromov-Hausdorff topology;  since Gromov-Hausdorff convergence of hyperbolic $3$-manifolds is $C^\infty $ \cite [Remark E.1.19] {Benedetti-Petronio}, it follows that $\lambda_1 (M_i) \to \lambda_1 (N) > 0 $. 

The proof of Theorem \ref{thm:main} goes as follows. Assume that we have a sequence $(M_i)$ of pairwise-distinct, closed hyperbolic 3-manifolds with $\inj(M_i)\ge\epsilon$, and suppose that each $\pi_1(M_i)$ can be generated by $k$ elements. We will show in Theorem \ref{lem:degenerate} that after passing to a subsequence, there are base points $p_i\in M_i$ such that the sequence of pointed manifolds $(M_i,p_i)$ converges in the Gromov-Hausdorff topology to a pointed manifold $(M_\infty,p_\infty)$ which has a degenerate end. It follows from Proposition \ref{Laplacianconvergence} that $\lambda_1(M_i) \to 0$.  Specifically, one shows that the Cheeger constants $h(M_i) \to 0$ and then applies a result of Buser \cite{Buser} to say the same for $\lambda_1(M_i)$.  

\medskip

Surprisingly, although our techniques are very geometric they have particular application to arithmetic manifolds.  In the last section, we prove the following result and several corollaries.

\begin{sat}\label{arithmetic-finite}
For all $\epsilon, k >0$, there are only finitely many commensurability classes of closed arithmetic hyperbolic 3-manifolds $M$ with $\inj(M)\ge\epsilon$ and $\rank(\pi_1(M))\le k$.
\end{sat}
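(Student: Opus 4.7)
I would argue by contradiction, combining the pointed Gromov-Hausdorff machinery of Theorem~\ref{thm:main} with the Burger-Sarnak spectral gap on congruence arithmetic manifolds. Suppose there exist pairwise non-commensurable closed arithmetic hyperbolic $3$-manifolds $M_1,M_2,\ldots$ each satisfying $\inj(M_i)\ge\epsilon$ and $\rank(\pi_1(M_i))\le k$. Non-commensurable manifolds are a fortiori non-isometric, so the proof of Theorem~\ref{thm:main} applies to $(M_i)$: after passing to a subsequence and choosing base points $p_i\in M_i$, Theorem~\ref{lem:degenerate} produces a pointed Gromov-Hausdorff limit $(M_\infty,p_\infty)$ with a degenerate end, and Proposition~\ref{Laplacianconvergence} then forces $\lambda_1(M_i)\to 0$.

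To turn this into a contradiction one must use arithmeticity, and the plan is to promote each $M_i$ to a commensurable \emph{congruence} manifold on which Burger-Sarnak supplies a uniform spectral gap. Let $\Gamma_i=\pi_1(M_i)$. The squared subgroup $\Gamma_i^{(2)}:=\langle\gamma^2:\gamma\in\Gamma_i\rangle$ has index at most $2^k$ in $\Gamma_i$, since $\Gamma_i/\Gamma_i^{(2)}$ is an elementary abelian $2$-group generated by the images of at most $k$ elements. By the arithmeticity criterion of Maclachlan-Reid, $\Gamma_i^{(2)}$ sits inside the group $\Lambda_i$ of norm-one units of a maximal order in the invariant quaternion algebra of $\Gamma_i$, and this $\Lambda_i$ is a congruence lattice in $\PSL_2(\BC)$ commensurable with $\Gamma_i$. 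Choose a torsion-free congruence subgroup $\widetilde\Gamma_i\subseteq\Lambda_i\cap\Gamma_i^{(2)}$ and set $\widetilde M_i=\BH^3/\widetilde\Gamma_i$. If the cover $\widetilde M_i\to M_i$ has degree bounded uniformly in $i$, then Schreier's formula gives $\rank(\pi_1\widetilde M_i)\le k'=k'(k)$, the cover relation forces $\inj(\widetilde M_i)\ge\epsilon$, and Burger-Sarnak \cite{Burger-Sarnak} gives $\lambda_1(\widetilde M_i)\ge 3/4$. Theorem~\ref{thm:main} applied to $(\widetilde M_i)$ with parameters $\epsilon,\,3/4,\,k'$ then yields only finitely many isometry classes, contradicting the fact that the $\widetilde M_i$ inherit pairwise distinct commensurability classes from the $M_i$.

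The heart of the argument is therefore the uniform bound on the index $[\Gamma_i:\widetilde\Gamma_i]$, equivalently the bound on the degree of the cover $\widetilde M_i\to M_i$. This is a delicate arithmetic step precisely because the congruence subgroup problem is open in $\PSL_2(\BC)$, so a generic finite-index subgroup of $\Lambda_i$ need not be congruence, and one cannot blithely intersect with $\Gamma_i^{(2)}$ and hope that the resulting subgroup has the required arithmetic property with bounded index. I expect the actual argument to sidestep this by extracting arithmetic information from the geometric limit $M_\infty$ produced in the first step --- using the structure of the degenerate end together with Borel's finiteness theorem for arithmetic hyperbolic $3$-orbifolds of bounded covolume --- so that one rules out the existence of infinitely many commensurability classes subordinate to a fixed limit $M_\infty$ rather than constructing the congruence refinement $\widetilde\Gamma_i$ by purely arithmetic means.
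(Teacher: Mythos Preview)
Your first paragraph is correct and matches the paper: one assumes an infinite sequence of pairwise incommensurable examples, passes to a subsequence so that $(M_i,p_i)\to(M_\infty,p_\infty)$ with a degenerate end via Proposition~\ref{lem:degenerate}, and thus $\lambda_1(M_i)\to 0$.

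The gap is exactly the one you name in your second and third paragraphs, and the paper does not resolve it along the lines you propose. You try to replace each $M_i$ by a commensurable congruence \emph{manifold} $\widetilde M_i$ with bounded rank and then reapply Theorem~\ref{thm:main}; as you observe, bounding $[\Gamma_i:\widetilde\Gamma_i]$ uniformly is hopeless in the absence of the congruence subgroup property. Your fallback speculation (extract arithmetic information from $M_\infty$ via Borel finiteness) is also not what happens.

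The paper's move is to go \emph{down} rather than sideways. By the Burger--Sarnak result packaged as Theorem~\ref{vigneras}, every closed arithmetic $M_i$ covers some hyperbolic \emph{orbifold} $\CO_i$ with $\lambda_1(\CO_i)\ge\tfrac34$. No rank or injectivity-radius control on $\CO_i$ is needed, and there is no index to bound. One then invokes Proposition~\ref{orbifoldlaplace}: if the $M_i$ converge to a limit with a degenerate end and positive injectivity radius, and they cover pairwise distinct compact orbifolds $\CO_i$, then $\lambda_1(\CO_i)\to 0$. (That proposition is proved by pushing a neighborhood of the degenerate end down to $\CO_\infty$ via the Thurston--Canary covering theorem and then applying the weak orbifold Buser inequality, Proposition~\ref{easyBuser}.) The spectral gap $\lambda_1(\CO_i)\ge\tfrac34$ forces the $\CO_i$ to repeat; once two of the $M_i$ cover the same orbifold they are commensurable, which is the desired contradiction.

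So the missing idea in your proposal is to work with orbifold \emph{quotients} of the $M_i$ and to use Proposition~\ref{orbifoldlaplace} (rather than Theorem~\ref{thm:main}) as the bridge between the degenerate end and the spectral gap. This is precisely why the paper develops the orbifold versions of Buser's inequality and of Proposition~\ref{prop:lambda-to-0}.
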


Our work is structured as follows.   After the introduction, we begin with a discussion of the eigenvalues of the Laplace-Beltrami operator and prove a weak version of Buser's inequality \cite{Buser} for hyperbolic $3$-orbifolds.   Section \ref{geosec} recalls well-known facts about Gromov-Hausdorff convergence and Section \ref{degenendssec} discusses degenerate ends and their relation to $\lambda_1$.  We introduce \it carrier graphs\rm, the main technical tool of this note, in Section \ref{carriersec} and in the next section use them to produce degenerate ends in Gromov-Hausdorff limits of manifolds with bounded rank and injectivity radius.  Finally, we show how our techniques are particularly effective when applied to arithmetic manifolds.  The arithmetic applications require a variant of Thurston's covering theorem, which is proved in an appendix.

\medskip

\noindent \bf Acknowledgements: \rm This work would have not been possible without the ideas of Ian Agol. The second author would like to thank the Universidad Aut\'onoma de Madrid for its hospitality while this paper was being written, and the first is indebted to Lisa Wang for her help typesetting.  We would especially like to thank the referee, without whom this paper would be half as long and far less than half as comprehensible.

\section{Eigenvalues of the Laplacian}\label{sec-proof}
Let $M$ be a closed hyperbolic 3-manifold. The {\em Laplacian} $\Delta f$ of a smooth function $f : M \to \BR$ is defined as 
$$\Delta f=-\diver\DD f$$
The Laplacian extends to a self-adjoint linear operator $\Delta_M $ on the Sobolev space $H^ 1(M)$.  It is well-known that the spectrum of this operator is a discrete subset of $[0,\infty) $; furthermore, $0 $ is an eigenvalue with ($1 $-dimensional) eigenspace the set of constant functions on $M $, and each other eigenspace is finite dimensional.  Let
$$0=\lambda_0(M)<\lambda_1(M)\le\lambda_2(M)\le\dots$$
be the eigenvalues of $\Delta_M$ in increasing order, listed so that repetitions indicate multiplicity.   

By work of Buser \cite {Buser} and Cheeger \cite {Cheeger}, the first nontrivial eigenvalue $\lambda_1 (M) $ is strongly tied to the \it Cheeger constant \rm of $M $; this is defined as
$$h(M)=\inf_{{\tiny
\begin{array}{c}
U \subset M
\end{array}
}}
\frac{\area(\partial U)}{\min\{\vol(U),\vol(M \setminus U)\}}$$ 
where the infimum is taken over smooth $3 $-dimensional submanifolds with boundary inside $M $.  Their work gives the following explicit relationship:
\begin{equation}\label{eq:buser}
\frac 14 h (M)^ 2 \le \lambda_1(M)\le 4h(M)^2+10h(M)
\end{equation}  Here, the first inequality is due to Cheeger and the second to Buser.  For us, the relevant implication of (\ref {eq:buser}) is that when $M $ is a closed hyperbolic $3$-manifold then $\lambda_1 (M) \approx 0   $ if and only if $  h (M) \approx 0 $.

Our work here requires only a very weak version of Buser's inequality.  Since this is easy to prove, we make our exposition self-contained by recording it below.  Another reason to do this is that we will need some version of Buser's result for \it orbifolds\rm, and this has not yet been written down.

Recall that a hyperbolic $3 $-orbifold is a metric quotient $\CO = \BH^ 3 / \Gamma, $  where $\Gamma < \Isom (\BH^3) $ is some discrete group of isometries of hyperbolic $3 $-space.  One can define a Laplacian operator $\Delta_{\CO} $ by letting $$H^ 1 (\CO) := \big \{  f : \CO \to \BR \ \big| \ f \text { lifts to } \tilde f \in H^ 1_{\text {loc} } (\BH^ 3) \ \big \} $$ and $\Delta_{\CO } $ be the operator on $H^ 1 (\CO) $ given by applying the Laplacian $\Delta_{\BH^ 3 } $  to the ($\Gamma $-invariant) lift $\tilde f$ and descending the $\Gamma $-invariant result to a map $ \CO \to \BR $. As before, $\Delta_\CO $ is self-adjoint with discrete, real spectrum
$$0=\lambda_0(\CO)<\lambda_1(\CO)\le\lambda_2(\CO)\le\dots$$ 
  If $\Gamma $ is torsion free, so that $\CO $ is a hyperbolic $3$-manifold, then this definition of $\Delta_\CO $ agrees with that given before.  

\begin {prop}
\label {easyBuser}
Assume that $\CO $ is a closed hyperbolic $3 $-orbifold and $U \subset \CO $ is an open set such that
\begin{enumerate}
\item the frontier $\Fr (U) $ of $U $ can be partitioned into $n $ subsets, each with diameter at most $D $,
\item both $U $ and $\CO \setminus U $ have volume at least $V $.
\end{enumerate}
Then if $\CV_{D + 1 } $ is the volume of a ball of radius $D+ 1 $ in $\BH^ 3 $, we have $$\lambda_1 (\CO) \leq \frac {n \CV_{D + 1 }} {V - n\CV_{D + 1 } }.$$ 
\end {prop}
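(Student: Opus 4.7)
The plan is to apply the Rayleigh--Ritz variational characterization
\[
\lambda_1(\CO) = \inf\left\{ \frac{\int_\CO |\nabla f|^2}{\int_\CO f^2} : f \in H^1(\CO),\ \textstyle\int_\CO f = 0,\ f \not\equiv 0 \right\}
\]
to an explicit test function built from the truncated signed distance to $\Fr(U)$. Writing $F = \Fr(U)$, define $h : \CO \to \BR$ by $h(x) = \min(d(x,F),1)$ on $\overline{U}$ and $h(x) = -\min(d(x,F),1)$ on $\CO \setminus U$. Since $h$ is $1$-Lipschitz on $\CO$, its lift to $\BH^3$ is $1$-Lipschitz and $\Gamma$-invariant, hence lies in $H^1_{\mathrm{loc}}(\BH^3)$, so $h \in H^1(\CO)$. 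Let $f = h - \bar h$, where $\bar h$ is the mean value, so that $\int f = 0$.

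For the numerator, note $|\nabla f| = |\nabla h| \leq 1$ almost everywhere and vanishes outside the $1$-neighborhood $N := N_1(F)$. Using the partition $F = F_1 \sqcup \cdots \sqcup F_n$ with $\diam(F_i) \leq D$, each $F_i$ sits in the closed ball of radius $D$ in $\CO$ about any of its points, so $N_1(F_i)$ is contained in a ball of radius $D+1$ in $\CO$. Since the quotient $\BH^3 \to \CO$ is a volume-nonincreasing local isometry, each such ball has volume at most $\CV_{D+1}$, and hence $\int_\CO |\nabla f|^2 \leq \vol(N) \leq n\CV_{D+1}$.

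For the denominator, set $A = \{x \in U : d(x,F) \geq 1\}$ and $B = \{x \in \CO \setminus U : d(x,F) \geq 1\}$, on which $f \equiv 1 - \bar h$ and $f \equiv -1 - \bar h$, respectively. The hypotheses give $\vol(A), \vol(B) \geq V - n\CV_{D+1}$, so
\[
\int_\CO f^2 \geq (1-\bar h)^2\,\vol(A) + (1+\bar h)^2\,\vol(B) \geq \frac{4\,\vol(A)\,\vol(B)}{\vol(A)+\vol(B)} \geq 2(V - n\CV_{D+1}),
\]
where the middle step is obtained by minimizing the quadratic in $\bar h$ (without needing to know the actual value of $\bar h$), and the last step uses that the harmonic mean of two positive numbers is at least the smaller one. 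Combining yields $\lambda_1(\CO) \leq \tfrac{n\CV_{D+1}}{2(V-n\CV_{D+1})}$, which is in fact stronger than the stated inequality by a factor of two.

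The only genuinely delicate point is that the argument takes place on an orbifold rather than a manifold: one must confirm that the variational principle, the Lipschitz-implies-$H^1$ implication, and the comparison of intrinsic diameter in $\CO$ with volumes of balls in $\BH^3$ all carry over unchanged. Each is immediate because the quotient map $\BH^3 \to \CO$ is a local isometry off a codimension-two singular set of measure zero, but as the authors note, some care is warranted since orbifold Cheeger/Buser inequalities have not been recorded explicitly.
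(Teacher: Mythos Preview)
Your proof is correct and follows essentially the same approach as the paper: both use the Rayleigh quotient characterization of $\lambda_1$, the same signed truncated distance function to $\Fr(U)$ as test function, and the same volume estimate $\vol(N_1(\Fr U)) \leq n\CV_{D+1}$. The only difference is in handling the mean-zero constraint: the paper rescales the test function on one side of $\Fr(U)$, whereas you subtract the mean and then minimize the resulting quadratic over all possible values of $\bar h$; your treatment is a little cleaner and, as you note, gains a harmless factor of $2$ in the final bound.
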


Consequently, $\lambda_1 (\CO) $ is small when $\CO$ can be divided into two large volume pieces using a small number of sets that have small diameter.

\begin{proof}
The eigenspace of $0$ is the space of constant functions on $\BH^ 3 $.  To compute $\lambda_1 (M ) $, one can then take an infimum of \it Rayleigh quotients \rm of functions in $H^ 1 (\CO) $ orthogonal to the constant functions \cite [pg. 16]{chavel}: $$\lambda_1 (\CO) = \inf \Big \{ \frac {\int ||\nabla f ||^2} {  \int f^2 } \ \big | \ f \in H^1 (\CO) \text { with } \int f = 0  \ \Big\} . $$ Let $N $ be the $1$-neighborhood of $\Fr (U) $ in $\CO $.  Then we can define $f: \CO \to \BR$ by $$ f (x) = \begin{cases}  1 &  x \notin N, x \in U\\ \dist (x,\Fr (U)) & x \in N , x \in U \\ - \dist (x,\Fr (U)) & x \in N , x \notin U \\ -1 & x \notin N , x \notin U \end{cases} .$$  The function $f $ is $1 $-lipschitz, so it lies in $H^ 1 (\CO) $  and is differentiable almost everywhere.  Its gradient vanishes outside $N $, and  $| | \nabla f | | \leq 1 $ everywhere within $N $ that it is defined.  Therefore,
\begin{eqnarray*}
\int | | \nabla f | | ^ 2 & \leq & \vol (N) \\ & \leq &  n \CV_{D + 1 }.
\end {eqnarray*}
Furthermore, we have both $$\int_{\CO \setminus U } f^ 2 , \  \int_{U } f^ 2 \ \  \geq \ \ V - n \CV_{D + 1 }.$$  In particular, the Rayleigh quotient of $f $ (more than) satisfies the inequality given in the statement of the Proposition.

We are not quite finished, though, since $f $ might not integrate to $0 $.  So, assume without loss of generality that $\int_{\CO \setminus U } f^ 2 < \  \int_{U } f^ 2 $.  Then we can create a new function $f' : \CO \to \BR $ by letting
$$f' \big|_U = \Big (\frac{\int_{\CO \setminus U } f^ 2 } { \int_{U } f^ 2} \Big) \ f \big|_U \ \ \text { and } \ \  f'\big|_{\CO \setminus U }= f \big|_{\CO \setminus U } .$$  Then $f' $ will have zero integral, and one easily checks that its Rayleigh quotient is less than or equal to the upper bound desired for $\lambda_1 (\CO) $.
\end{proof}

\section { Geometric Convergence }
\label{geosec}
To understand the geometry of a particular family of closed hyperbolic $3$-manifolds, it is often useful to study (non-compact) hyperbolic $3$-manifolds that arise as  \it limits \rm of sequences in that family.  We recall here some tools from the theory of geometric limits that will find application later; unless otherwise stated, the material in this section can be found in \cite [Section E.1] {Benedetti-Petronio} or \cite [Chapter 7] {Japanese}.

Recall that hyperbolic $3$-manifolds are metric quotients of $\BH^ 3 $ by discrete, torsion-free groups of isometries; we first discuss convergence for subgroups of $\Isom(\BH^ 3) $. A sequence of closed subgroups $\Gamma_i \subset \Isom(\BH^ 3) $ converges in the \it Chabauty topology \rm to $\Gamma_\infty \subset \Isom(\BH^ 3) $ if
\begin {enumerate}
\item $\Gamma $ contains all accumulation points of sequences $(\gamma_i) $, $\gamma_i \in \Gamma_i $,
\item every $\gamma \in \Gamma_\infty $ is the limit of some sequence $(\gamma_i)$, with $\gamma_i \in \Gamma_i $.
\end{enumerate}
When $\Gamma_i \to \Gamma_\infty $, it is easy to see that $\Gamma_\infty $ is a closed subgroup of $\Isom(\BH^ 3) $.  In fact, the space of closed subgroups of $\Isom(\BH^ 3) $ is compact with respect to the Chabauty topology.  We are primarily interested in limits of discrete (and often torsion-free) subgroups, however.  While these do not form a closed subspace, the following well-known fact constrains how sequences of such groups can degenerate.

\begin{fact}[see Prop 7.2, \cite{Japanese}] Assume that $\Gamma_i \subset \Isom (\BH^ 3) $ is a sequence of discrete (and torsion-free) subgroups that converges in the Chabauty topology to some closed subgroup $\Gamma_\infty \subset \Isom (\BH^ 3) $.  Then either
\begin{enumerate}
\item there exists a sequence $\gamma_i \in \Gamma_i $ with $\gamma_i \to id $ and $\Gamma_\infty $ is virtually abelian, or
\item there is no such sequence and $\Gamma_\infty $ is discrete (and torsion-free).
\end{enumerate}
\label {degeneracy}
\end{fact}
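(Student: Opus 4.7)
The plan is to handle the two alternatives separately. Case (2) is a direct consequence of Chabauty convergence: the hypothesis means there is an open neighborhood $U$ of $\mathrm{id}$ in $\Isom(\BH^3)$ with $\Gamma_i\cap U=\{\mathrm{id}\}$ for all sufficiently large $i$, so any $g\in\Gamma_\infty\cap U$ is a Chabauty limit of elements eventually forced to equal $\mathrm{id}$, giving $g=\mathrm{id}$ and hence discreteness of $\Gamma_\infty$. For torsion-freeness, if $g\in\Gamma_\infty$ has finite order $n$ and $g_i\to g$, then $g_i^n\to\mathrm{id}$, so $g_i^n=\mathrm{id}$ eventually by discreteness, and then $g_i=\mathrm{id}$ by torsion-freeness of $\Gamma_i$, forcing $g=\mathrm{id}$.

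For case (1), the main tool is Jorgensen's inequality. Given any $g=\lim g_i\in\Gamma_\infty$, the limit $\gamma_i\to\mathrm{id}$ yields $\tr^2(\gamma_i)\to 4$ and $\tr([\gamma_i,g_i])\to 2$, so eventually Jorgensen's quantity drops below $1$; discreteness of $\Gamma_i$ then forces $\langle\gamma_i,g_i\rangle$ to be elementary. In the torsion-free setting, such an elementary group is either purely parabolic with a single common fixed point on $\partial\BH^3$, or loxodromic preserving a geodesic with two distinct endpoints at infinity; in either case $g_i$ preserves the fixed-point set $F_i\subset\partial\BH^3$ of $\gamma_i$.

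Passing to a subsequence so that $F_i\to F_\infty\subset\partial\BH^3$ in the Hausdorff topology, I note by continuity of the boundary action that every $g\in\Gamma_\infty$ preserves $F_\infty$, a set of one or two points. If $|F_\infty|=2$ the stabilizer of $F_\infty$ in $\Isom(\BH^3)$ is $(\BR\times\mathrm{O}(2))\rtimes\BZ/2$, visibly virtually abelian, so $\Gamma_\infty$ is too. In the harder case $F_\infty=\{p\}$ the ambient stabilizer $\BC\rtimes\BC^*$ is only solvable, and one must argue that every $g\in\Gamma_\infty$ is parabolic at $p$ or trivial: torsion-freeness forces each $g_i$ to fix both points of $F_i$, and the dynamics of fixed points under perturbation rules out loxodromic or elliptic limits because their two distinct boundary fixed points cannot collide to $p$.

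The main obstacle I anticipate is making rigorous this last exclusion in the degenerate case $|F_\infty|=1$; this amounts to a continuity statement for fixed-point data under the compact-open topology on $\Isom(\BH^3)$ when the axes of the approximating loxodromics collapse to a single point at infinity.
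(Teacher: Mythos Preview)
The paper does not supply a proof of this statement: it is recorded as a Fact with a pointer to Proposition~7.2 of Matsuzaki--Taniguchi, so there is nothing in the paper to compare your argument against.  That said, your outline is exactly the standard proof found in that reference and elsewhere: direct Chabauty bookkeeping for alternative~(2), and J{\o}rgensen's inequality to force each $\langle\gamma_i,g_i\rangle$ elementary for alternative~(1), followed by passage to a limiting finite fixed set $F_\infty\subset\partial\BH^3$.

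The difficulty you flag in the case $|F_\infty|=1$ is not a genuine obstacle.  If some $g\in\Gamma_\infty$ fixed a second boundary point $q\neq p$, then since having two distinct fixed points is an open condition (they are roots of a quadratic whose coefficients vary continuously with the matrix), for large $i$ the approximants $g_i$ would have two distinct fixed points converging to $\{p,q\}$.  But you have already shown that each nontrivial $g_i$ has fixed set exactly $F_i$, and $F_i\to\{p\}$; this is a contradiction.  Hence every orientation-preserving $g\in\Gamma_\infty$ is parabolic at $p$ or trivial, so $\Gamma_\infty\cap\PSL_2\BC$ is abelian; as this has index at most $2$ in $\Gamma_\infty$, the group is virtually abelian.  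The one point you do gloss over is that J{\o}rgensen's inequality is a statement about $\PSL_2\BC$, not all of $\Isom(\BH^3)$: this is harmless because $\gamma_i\to\mathrm{id}$ forces $\gamma_i$ to be orientation-preserving eventually, and the index-$2$ reduction just mentioned handles orientation-reversing limits.
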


The Chabauty topology on the space of discrete subgroups $\Gamma \subset \Isom (\BH^ 3) $ is related to the Gromov-Hausdorff topology on the space of quotient orbifolds $\CO_\Gamma = \BH^ 3 / \Gamma $.  To see this, fix a point $p \in \BH^3$.  Then each $\CO_{\Gamma_i }$ is naturally a \it pointed hyperbolic $3$-orbifold: \rm the projection of $p$ gives a preferred basepoint $p_\Gamma \in \CO_\Gamma$. 

\begin{defi}\rm A sequence of pointed hyperbolic $3 $-orbifolds $(\CO_i,p_i) $ converges in the \emph{pointed Gromov-Hausdorff topology} to $(\CO_\infty,p_\infty) $ if for every compact $K \subset \CO_\infty $ containing $p_\infty $, there is a sequence of $\lambda_i$-bilipschitz maps
\begin{equation}\label{eq:almost-isometric}
\phi_i:(K,p_\infty)\to(\CO_i,p_i)
\end{equation}
 with $1 \leq \lambda_i \leq \infty $ and $\lambda_i \to 1 $ as $i \to \infty $.  We will call $(\phi_i) $ a sequence of \it almost isometric maps \rm coming from Gromov-Hausdorff convergence.
\end{defi}
We then have:

\begin{sat}
If a sequence of discrete subgroups $\Gamma_i \subset \Isom(\BH^3)$ converges to a discrete subgroup $\Gamma \subset \Isom(\BH^3)$, then the pointed $3$-orbifolds $(\CO_{\Gamma_i},p_{\Gamma_i})$ converge in the Gromov-Hausdorff topology to $(\CO_{\Gamma},p_{\Gamma})$.  

Conversely, if a sequence of pointed hyperbolic $3$-orbifolds $(\CO_i, p_i)$ converges to $(\CO_\infty, p_\infty)$, there are discrete subgroups $\Gamma_i < \Isom \BH^3$ with $(\CO_i, p_i) \cong (\CO_{\Gamma_i}, p_{\Gamma_i})$ and $\Gamma_i \to \Gamma_\infty$ in the Chabauty topology.  


\label{GHvsCH}
\end{sat}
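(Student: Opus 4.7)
The plan is to use the natural dictionary between a fundamental set around $p \in \BH^3$ and a neighborhood of the basepoint in the quotient orbifold. For the first direction (Chabauty $\Rightarrow$ Gromov--Hausdorff), fix a compact set $K \subset \CO_{\Gamma_\infty}$ containing $p_{\Gamma_\infty}$ and a compact lift $\tilde K \subset \BH^3$ containing $p$. Since $\Gamma_\infty$ is discrete, only finitely many elements $\gamma_1,\ldots,\gamma_m \in \Gamma_\infty$ satisfy $\gamma_j \tilde K \cap N_1(\tilde K) \neq \emptyset$, and by Chabauty convergence there are approximating $\gamma_{j,i} \in \Gamma_i$ with $\gamma_{j,i} \to \gamma_j$; meanwhile condition~(1) of the Chabauty topology together with discreteness of $\Gamma_\infty$ rules out any \emph{other} element of $\Gamma_i$ having this property for large $i$. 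Thus the quotient map $\tilde K \to \CO_{\Gamma_i}$ implements essentially the same identifications as $\tilde K \to \CO_{\Gamma_\infty}$, perturbed only by the small isometries $\gamma_{j,i}\gamma_j^{-1}$, and so factors through a $\lambda_i$-bilipschitz map $K \to \CO_{\Gamma_i}$ with $\lambda_i \to 1$ sending $p_{\Gamma_\infty}$ to $p_{\Gamma_i}$.

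For the converse, write $\CO_\infty = \BH^3/\Gamma_\infty$ with $p \in \BH^3$ a chosen lift of $p_\infty$, take an exhaustion $K_i \subset \CO_\infty$ by compact sets containing $p_\infty$ with accompanying almost-isometric maps $\phi_i:(K_i,p_\infty)\to(\CO_i,p_i)$, and lift each $\phi_i$ to $\tilde\phi_i:\tilde K_i \to \BH^3$ with $\tilde\phi_i(p) = p$, where $\tilde K_i \subset \BH^3$ is a compact lift of $K_i$ containing $p$. After post-composing with a fixed isometry of $\BH^3$, we may also arrange that $d\tilde\phi_i|_p \to \Id$. Let $\Gamma_i < \Isom(\BH^3)$ be the group of deck transformations of $\BH^3 \to \CO_i$ realized via this lift; tautologically, $(\CO_i,p_i) \cong (\CO_{\Gamma_i},p_{\Gamma_i})$, so it only remains to check $\Gamma_i \to \Gamma_\infty$ in the Chabauty topology.

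To verify Chabauty convergence, fix $\gamma \in \Gamma_\infty$; since $p$ and $\gamma p$ both project to $p_\infty$, the images $\tilde\phi_i(p) = p$ and $\tilde\phi_i(\gamma p)$ both project to $p_i$ in $\CO_i$, yielding $\gamma_i \in \Gamma_i$ with $\gamma_i p = \tilde\phi_i(\gamma p)$; uniform convergence of $\tilde\phi_i$ and $d\tilde\phi_i$ to the identity on compacta then forces $\gamma_i \to \gamma$ in $\Isom(\BH^3)$. Conversely, if $\gamma_{i_k} \to \gamma$ along a subsequence, for large $k$ one can write $\gamma p = \tilde\phi_{i_k}(q_k)$ for some $q_k \in \tilde K_{i_k}$ and run the previous argument in reverse: injectivity of $\phi_{i_k}$ on small balls forces $q_k = \eta_k p$ for some $\eta_k \in \Gamma_\infty$, and discreteness of $\Gamma_\infty$ forces $\eta_k = \gamma$ eventually, so $\gamma \in \Gamma_\infty$. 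The principal technical point will be tracking the differentials $d\tilde\phi_i$ and not merely the maps themselves, since this is what pins down limiting elements inside $\Isom(\BH^3)$ rather than just orbit limits in $\BH^3$; beyond this, no special treatment of orbifold points is needed, because discreteness of $\Gamma_\infty$ already controls the local identifications uniformly.
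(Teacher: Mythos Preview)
The paper does not prove this theorem; it is stated without proof as a standard fact, with the reader directed at the start of Section~\ref{geosec} to \cite[Section~E.1]{Benedetti-Petronio} and \cite[Chapter~7]{Japanese}. So there is no argument in the paper against which to compare yours.

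Your sketch follows the standard proof found in those references and is broadly correct, but two steps would need tightening if you wrote it out in full. In the forward direction, the projection $\tilde K \to \CO_{\Gamma_i}$ does not literally \emph{factor through} $K = \tilde K / \Gamma_\infty$: the identifications imposed by $\Gamma_i$ are close to, but not equal to, those imposed by $\Gamma_\infty$. One must first build a diffeomorphism of $\tilde K$ close to the identity that carries the $\Gamma_\infty$-identifications to the $\Gamma_i$-identifications (e.g.\ by comparing Dirichlet domains at $p$ and their face-pairings), and then project; the bilipschitz constant of the resulting $K \to \CO_{\Gamma_i}$ comes from that perturbation, not from the projection itself. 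In the converse direction, your lift $\tilde\phi_i$ lives only on a single fundamental-domain patch $\tilde K_i$, which does not exhaust $\BH^3$, so the clause ``for large $k$ one can write $\gamma p = \tilde\phi_{i_k}(q_k)$'' is not automatic. The clean fix is to note that $\tilde\phi_i$ induces a homomorphism $\rho_i$ from the subgroup of $\Gamma_\infty$ represented by loops in $K_i$ into $\Gamma_i$, via the equivariance relation $\tilde\phi_i \circ \gamma = \rho_i(\gamma) \circ \tilde\phi_i$ on overlaps; then $\rho_i(\gamma) \to \gamma$ follows directly from $\tilde\phi_i \to \Id$ on compacta, and the accumulation-point condition is handled by the same relation read backwards. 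With these adjustments your argument goes through, and your remark about tracking frames (not just orbit points) to pin down limits in $\Isom(\BH^3)$ is exactly the right observation.
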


The following useful result comes from translating Fact \ref{degeneracy} (2) into the language of orbifolds and Gromov-Hausdorff convergence. 

\begin{kor}
Fix $\epsilon > 0$.  The set of closed, pointed hyperbolic $3$-manifolds $(M,p)$ such that $\inj(M,p) \geq \epsilon $ is compact in the pointed Gromov-Hausdorff topology. 
\label{thickcompact}
\end{kor}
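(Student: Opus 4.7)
The plan is to combine the Chabauty-compactness of the space of closed subgroups of $\Isom(\BH^3)$ with the dictionary provided by Theorem \ref{GHvsCH}, using the injectivity radius bound as the only ingredient preventing a degenerate limit. Given a sequence $(M_i, p_i)$ of closed pointed hyperbolic $3$-manifolds with $\inj(M_i, p_i) \geq \epsilon$, I first apply Theorem \ref{GHvsCH} to realize each as $M_i = \CO_{\Gamma_i}$, $p_i = p_{\Gamma_i}$, for some discrete, torsion-free $\Gamma_i \subset \Isom(\BH^3)$, where $p_{\Gamma_i}$ is the image of a fixed basepoint $p \in \BH^3$. Under this identification the hypothesis $\inj(M_i, p_i) \geq \epsilon$ translates into the purely group-theoretic statement that every non-identity $\gamma \in \Gamma_i$ satisfies $d_{\BH^3}(\gamma p, p) \geq 2\epsilon$.

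Next, by compactness of the Chabauty topology on closed subgroups of $\Isom(\BH^3)$, after passing to a subsequence we may assume $\Gamma_i \to \Gamma_\infty$ for some closed subgroup $\Gamma_\infty \subset \Isom(\BH^3)$. The heart of the argument is to upgrade this to Gromov-Hausdorff convergence of orbifolds; by Theorem \ref{GHvsCH} this amounts to showing that $\Gamma_\infty$ is discrete and torsion-free. Here I invoke Fact \ref{degeneracy}: alternative (2) gives precisely what is wanted, while alternative (1) would produce non-trivial elements $\gamma_i \in \Gamma_i$ with $\gamma_i \to \Id$. The injectivity radius bound rules out the latter immediately, since every such $\gamma_i$ moves $p$ by at least $2\epsilon$, which is incompatible with $\gamma_i \to \Id$.

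Having obtained $\Gamma_\infty$ discrete and torsion-free, Theorem \ref{GHvsCH} yields the desired Gromov-Hausdorff convergence $(M_i, p_i) \to (M_\infty, p_\infty)$ with $M_\infty := \BH^3/\Gamma_\infty$ a hyperbolic $3$-manifold (possibly non-closed) and $p_\infty := p_{\Gamma_\infty}$. To conclude compactness rather than mere pre-compactness, I check that the bound passes to the limit: any non-identity $\gamma \in \Gamma_\infty$ is a limit $\gamma = \lim \gamma_i$ with $\gamma_i \in \Gamma_i$ eventually non-trivial, so
\[
d(\gamma p, p) = \lim_{i\to\infty} d(\gamma_i p, p) \geq 2\epsilon,
\]
whence $\inj(M_\infty, p_\infty) \geq \epsilon$ as well.

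The main (small) obstacle is the correct reading of Fact \ref{degeneracy}(1): the relevant degeneration is the accumulation of \emph{non-trivial} elements of $\Gamma_i$ at the identity, and it is exactly this phenomenon that the injectivity radius hypothesis quantitatively forbids. Everything else is a direct transcription through the Chabauty--Gromov-Hausdorff dictionary.
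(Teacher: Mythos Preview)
Your argument is correct and is exactly the route the paper indicates: it states the corollary as an immediate consequence of ``translating Fact \ref{degeneracy}(2) into the language of orbifolds and Gromov-Hausdorff convergence,'' and your proof spells out precisely that translation via Chabauty compactness and Theorem \ref{GHvsCH}. One cosmetic remark: the realization of each $(M_i,p_i)$ as $(\BH^3/\Gamma_i, p_{\Gamma_i})$ is just uniformization and choice of a lift of the basepoint, not really an application of Theorem \ref{GHvsCH}; and as you correctly note, the limit need not be closed, so the corollary is best read as compactness within the space of all pointed hyperbolic $3$-manifolds with $\inj(\cdot,p)\geq\epsilon$.
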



\section { Degenerate ends and the Laplacian}
\label{degenendssec}
The next three sections require some basic facts from the theory of hyperbolic $3$-manifolds, namely the definition of the convex core and the geometric classification of ends.  

\begin{defi}
\rm The \it convex core \rm of a hyperbolic $3$-manifold $M$ is the smallest convex submanifold $CC ( M) $ whose inclusion into $M$ is a homotopy equivalence.  
Equivalently, if $M = \BH^3 / \Gamma $ then $ CC (M) $ is the projection to $M $ of the convex hull of the limit set $\Lambda(\Gamma) \subset \partial_\infty \BS^2_\infty $.  
\end{defi}

Note that the limit set associated to a closed hyperbolic manifold is all of $\BS^2_\infty $, so any such manifold is its own convex core.

The ends of noncompact hyperbolic $3$-manifolds fall into two geometric categories, depending on their relationships with the convex core. Specifically, let $M $ be a complete, infinite volume hyperbolic $3$-manifold with finitely generated fundamental group and no cusps.   An end $\CE$ of $M $ is called \it convex cocompact \rm if it has a neighborhood whose intersection with the convex core of $M $ is bounded, and {\em degenerate} otherwise.

The geometry of each of these types of ends is well understood.  First, the Tameness Theorem of Agol \cite{Agol} and Calegari-Gabai \cite {Calegari-Gabai} implies that every end $\CE $ of $M $ has a neighborhood $E $ that is homeomorphic to $\Sigma \times (0,\infty) $ for some closed surface $\Sigma $.   It is well-known that if $\CE $  is convex cocompact, then $E $ is bilipschitz to a warped product on $\Sigma \times (0,\infty)$ in which the metric on $\Sigma \times \{t\} $ is scaled by a factor exponential in $t $.  

On the other hand, we have the following well-known consequence of Canary's Filling Theorem \cite{Canary-covering}, Bonahon's Bounded Diameter Lemma \cite{Bonahon} and work of Freedman-Hass-Scott \cite [Theorem 2.5] {Canary-Minsky}.



\begin {fact}
\label{exitingsurfaces}
Every degenerate end of $M $ has a neighborhood $E$ homeomorphic to $\Sigma \times (0,\infty) $ in which every point lies within unit distance of a level surface with area at most $2\pi \chi (\Sigma) $.  
Here, a \emph{level surface} is any embedded surface homotopic to a fiber $\Sigma \times \{t\}$.

If $\inj (M) \geq \epsilon $, then $E $ can be chosen so that each of its points lies within unit distance of a level surface with diameter bounded above by some constant $C$ depending only on $\epsilon $ and $\chi (\Sigma) $.

In both cases, there is a sequence $(S_i)$ of such surfaces that exits $\CE$, meaning that every neighborhood of $\CE$ contains $S_i$ for large enough $i$.

\end{fact}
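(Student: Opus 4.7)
The plan is to combine tameness (already invoked above), Canary's Filling Theorem, Bonahon's Bounded Diameter Lemma, and Freedman--Hass--Scott to produce a sequence of embedded level surfaces exiting $\CE$ with the claimed area, diameter, and density controls.

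First, Canary's Filling Theorem asserts that $\CE$ is \emph{simply degenerate}: there is a sequence of essential simple closed curves $\gamma_n \subset \Sigma$ whose geodesic representatives $\gamma_n^* \subset M$ exit $\CE$, and each $\gamma_n^*$ is realized on a pleated surface $f_n \colon \Sigma \to M$ in the homotopy class of the fiber inclusion. Since pleated surfaces carry an intrinsic hyperbolic metric of constant curvature $-1$, Gauss--Bonnet gives $\area(f_n(\Sigma)) \le 2\pi|\chi(\Sigma)|$. Under the hypothesis $\inj(M) \ge \epsilon$, Bonahon's Bounded Diameter Lemma further bounds the intrinsic diameter of $f_n(\Sigma)$ (and hence its extrinsic diameter in $M$) by a constant $C = C(\epsilon, \chi(\Sigma))$.

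To promote the pleated surfaces to embedded level surfaces, I would invoke Freedman--Hass--Scott \cite[Theorem 2.5]{Canary-Minsky}: in the isotopy class of the embedded fiber $\Sigma \times \{t\}$, a least-area representative exists and is automatically embedded. Its area is no greater than that of $f_n(\Sigma)$, and a standard barrier/monotonicity argument places it within bounded Hausdorff distance of $f_n(\Sigma)$; thus the area and diameter bounds persist, after enlarging $C$ by a uniform additive constant. These embedded surfaces $S_n$ exit $\CE$ because the $f_n(\Sigma)$ do. For the density claim that every point of the chosen neighborhood $E$ lies within unit distance of such a level surface, I would insert additional pleated surfaces between the $f_n$'s: choosing auxiliary simple closed curves whose geometric intersection numbers with $\gamma_n$ and $\gamma_{n+1}$ are controlled forces the corresponding pleated surfaces to stay at bounded Hausdorff distance, and alternatively one can use a continuous sweepout of $\CE$ by a one-parameter family of pleated (or simplicial hyperbolic) surfaces.

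The main obstacle is this density/covering claim: Canary's Filling Theorem produces an exiting sequence of pleated surfaces but gives no \emph{a priori} control on the gaps between consecutive ones. Closing that gap---either by an intersection-number interpolation in the thick case or by a sweepout argument in general---and then rescaling to achieve the explicit ``unit distance'' bound, is the most delicate part of the proof.
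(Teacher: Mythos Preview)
Your proposal is correct and matches the paper's approach. The paper does not actually prove this Fact; it simply labels it a ``well-known consequence'' of exactly the three ingredients you name (Canary's Filling Theorem, Bonahon's Bounded Diameter Lemma, and Freedman--Hass--Scott), and you have assembled them in the intended way.

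Two small remarks. First, the density claim you flag as the main obstacle is resolved precisely by the \emph{interpolation} form of Canary's Filling Theorem (stated in the paper's appendix): between any two useful simplicial hyperbolic surfaces there is a continuous homotopy through simplicial hyperbolic surfaces, so one gets a surface through every point of the end, not merely an exiting sequence. Your ``sweepout'' suggestion is exactly this; the intersection-number interpolation is not needed. Second, the paper (in its appendix) works with simplicial hyperbolic surfaces rather than pleated surfaces, but this is a technical variant with the same Gauss--Bonnet area bound and the same Bounded Diameter Lemma, not a different strategy. Your outline with pleated surfaces goes through equally well.
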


It follows that inside a degenerate end there are submanifolds with arbitrarily large volume that are bounded by surfaces with small  area.  Therefore, if one can find large pieces of a degenerate end inside of a \it closed \rm hyperbolic $3$-manifold, the Cheeger constant of that manifold will be small.  Buser's inequality (\ref {eq:buser}) will imply the same about the first eigenvalue of its Laplacian.

One way to formalize this is with the following Proposition.

\begin{prop}\label{prop:lambda-to-0}
Assume that $(M_i, x_i) $ is a sequence of pointed, closed hyperbolic $3$-manifolds that converges in the pointed Gromov-Hausdorff topology to a pointed hyperbolic $3$-manifold  $(M_\infty, x_\infty) $ that has a degenerate end.  Then $\lambda_1 (M_i) \to 0 $ as $i \to \infty $.
\label {Laplacianconvergence}
\end{prop}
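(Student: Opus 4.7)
My plan is to use Fact \ref{exitingsurfaces} to find, inside the degenerate end $\CE$ of $M_\infty$, a pair of embedded surfaces of uniformly bounded area that cut off regions of arbitrarily large volume in $M_\infty$, and then to transport this configuration to $M_i$ via the almost isometric maps supplied by Gromov--Hausdorff convergence. This forces the Cheeger constants $h(M_i)$ to tend to zero, after which the Buser half of (\ref{eq:buser}) immediately gives $\lambda_1(M_i) \to 0$.

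In detail, let $A = 2\pi |\chi(\Sigma)|$ be the uniform area bound from Fact \ref{exitingsurfaces}, and let $(S_j)$ be a sequence of level surfaces exiting $\CE$ with $\area(S_j) \leq A$. Using the standard fact that a degenerate end of a complete hyperbolic $3$-manifold has infinite volume, given any $V > 0$ I can fix $S_j$ and then choose indices $k > j$ and $\ell > k$ so large that the compact region $U \subset M_\infty$ cobounded by $S_j$ and $S_k$ and the compact region $U' \subset M_\infty$ cobounded by $S_k$ and $S_\ell$ each have volume at least $V$. Let $K \subset M_\infty$ be a compact set containing $x_\infty$, $U$, and $U'$.

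For $i$ sufficiently large, the almost isometric map $\phi_i \colon K \to M_i$ is $(1+\epsilon_i)$-bilipschitz with $\epsilon_i \to 0$, so $U_i := \phi_i(U)$ is a smooth $3$-dimensional submanifold of $M_i$ whose frontier $\phi_i(S_j) \cup \phi_i(S_k)$ has area at most $(1+\epsilon_i)^2 \cdot 2A$, whose volume is at least $(1-\epsilon_i)^3 V$, and whose complement in $M_i$ contains $\phi_i(U')$ and therefore also has volume at least $(1-\epsilon_i)^3 V$. Substituting $U_i$ into the definition of the Cheeger constant yields
\[
h(M_i) \;\leq\; \frac{(1+\epsilon_i)^2 \cdot 2A}{(1-\epsilon_i)^3 V},
\]
so $\limsup_i h(M_i) \leq 2A/V$. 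Since $V$ is arbitrary, $h(M_i) \to 0$, and Buser's inequality $\lambda_1 \leq 4h^2 + 10h$ then completes the proof.

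The step requiring the most attention is the infinite-volume assertion used to make $U$ and $U'$ large; this is a standard consequence of the theory of degenerate ends (for example, the exiting level surfaces in Fact \ref{exitingsurfaces} cannot accumulate arbitrarily densely in a hyperbolic $3$-manifold, so the regions they successively cut off exhaust a set of infinite volume), but it deserves an explicit citation. Once that point is fixed, the rest is a clean transport of a controlled ``neck'' across the almost isometries together with some bookkeeping.
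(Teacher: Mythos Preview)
Your proof is correct and follows essentially the same route as the paper: find in the degenerate end a region of large volume bounded by surfaces of bounded area, transport it via the almost isometric maps, and invoke Buser's inequality. The only cosmetic difference is in how the complement volume is controlled---you carry along an auxiliary region $U'$ to witness it directly, whereas the paper simply observes that $\vol(M_i)\to\infty$ since the limit $M_\infty$ is noncompact, so $\vol\big(M_i\setminus\phi_i(U)\big)\to\infty$ automatically.
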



\begin{proof}
The manifold $M_\infty$ has a degenerate end, so for each $\eta>0$ there is a compact submanifold $U \subset M_\infty $ with $\frac{\area(\D U)}{\vol(U)}<\eta$. Choose a compact subset $K \subset M_\infty $ containing both $x_\infty $ and $U $, and let $\phi_i : K \to M_i$ be a sequence of bilipschitz maps as in \eqref{eq:almost-isometric}. We have then
$$\lim_{i\to\infty}\frac{\area(\D(\phi_i(U)))}{\vol(\phi_i(U))}=\frac{\area(\D U)}{\vol(U)}<\eta. $$ Taking into account that the volume of $M_i \setminus \phi_i(U) $ grows without bound, we deduce that 
$$\limsup_{i\to\infty} h(M_i)\le\eta$$
Buser's inequality \eqref{eq:buser} implies that
$$\limsup_{i\to\infty}\lambda_1(M_i)\le 4\eta^2+10\eta$$
Since $\eta>0$ was arbitrary we conclude that $\lim_{ i \to \infty }\lambda_1(M_i)=0$.\end{proof}

All our applications of this result will be to sequences satisfying $\inj (M_i) \geq \epsilon $.  In those cases, one may appeal to Proposition \ref {easyBuser} instead of Buser's inequality to extend the conclusions above to orbifolds, as in the following technical proposition.  

\begin{prop}
Let $(\CO_i) $ be a sequence of pairwise-distinct compact hyperbolic $3 $-orbifolds that are covered by closed hyperbolic $3$-manifolds $(M_i) $. Assume that there are base points $x_i \in M_i $ such that $(M_i, x_i) $ converges in the pointed Gromov-Hausdorff topology to a pointed hyperbolic $3$-manifold $(M_\infty, x_\infty) $ that has a degenerate end and has $\inj (M_\infty) >0 $.  
Then $\lambda_1 (\CO_i) \to 0 $ as $ i \to \infty. $
\label{orbifoldlaplace}
\end{prop}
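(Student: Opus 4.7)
The plan is to pass to a subsequence along which the orbifolds $\CO_i$ themselves Gromov-Hausdorff converge to a non-compact hyperbolic $3$-orbifold $\CO_\infty$ possessing a degenerate end, and then to transport large-volume subsets with small-diameter frontier from $\CO_\infty$ to the $\CO_i$ in order to invoke Proposition \ref{easyBuser}.

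Write $M_i = \BH^3/\Lambda_i$ and $\CO_i = \BH^3/\Gamma_i$ with $\Lambda_i < \Gamma_i$, choosing basepoints so that $\Lambda_i \to \Lambda_\infty$ in the Chabauty topology, with $M_\infty = \BH^3/\Lambda_\infty$. By Chabauty compactness, pass to a further subsequence along which $\Gamma_i \to \Gamma_\infty$ for some closed $\Gamma_\infty \supset \Lambda_\infty$. Since $M_\infty$ has a degenerate end, $\Lambda_\infty$ contains a surface subgroup and so is not virtually abelian; hence neither is $\Gamma_\infty$, and Fact \ref{degeneracy} forces $\Gamma_\infty$ to be discrete. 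Theorem \ref{GHvsCH} then gives pointed Gromov-Hausdorff convergence $\CO_i \to \CO_\infty := \BH^3/\Gamma_\infty$.

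Next, I show $\CO_\infty$ is non-compact, has infinite volume, and has a degenerate end. Compactness is ruled out as follows: by invariance of domain, the bilipschitz maps $\psi_i : \CO_\infty \to \CO_i$ coming from pointed Gromov-Hausdorff convergence have image both compact and open, hence equal to the connected $\CO_i$; since their Lipschitz constants tend to $1$, Mostow rigidity for closed hyperbolic $3$-orbifolds then forces $\CO_i \cong \CO_\infty$ for all large $i$, contradicting pairwise distinctness. A cusped (finite-volume) $\CO_\infty$ is excluded via Canary's Covering Theorem applied to $M_\infty \to \CO_\infty$: in the main case, the degenerate end of $M_\infty$ would have to finitely cover a cusp of $\CO_\infty$, impossible since the former has a surface-group fundamental group while the latter has a virtually abelian one; Canary's exceptional fibered alternative would require a closed fiber (forcing $\CO_\infty$ compact) or a cusped one (forcing $\inj(M_\infty) = 0$), both contradictions. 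With $\CO_\infty$ now of infinite volume, the main case of Canary's theorem identifies a degenerate end of $\CO_\infty$ finitely covered by that of $M_\infty$.

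Finally, level surfaces in this degenerate end of $\CO_\infty$ may be taken as the finite-to-one projections of those in $M_\infty$, hence have diameter bounded by some constant $D$ depending on $\inj(M_\infty)$ and the Euler characteristic of a fiber. This yields compact subsets $U_n \subset \CO_\infty$ with $\vol(U_n) \to \infty$ and frontier contained in a single set of diameter at most $D$. Transporting $U_n$ via the almost-isometric maps of pointed Gromov-Hausdorff convergence produces $U_n^i \subset \CO_i$ whose volume tends to $\vol(U_n)$ and whose frontier has diameter at most $D+1$ for $i$ large. Lower semicontinuity of volume under pointed GH convergence together with $\vol(\CO_\infty) = \infty$ gives $\vol(\CO_i) \to \infty$, so $\vol(\CO_i \setminus U_n^i) \to \infty$ as well. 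Proposition \ref{easyBuser} then yields $\lambda_1(\CO_i) \le \CV_{D+2}/(V_n^i - \CV_{D+2})$ where $V_n^i := \min(\vol(U_n^i), \vol(\CO_i \setminus U_n^i)) \to \infty$, and a diagonal argument in $n$ and $i$ forces $\lambda_1(\CO_i) \to 0$ along the subsequence; since this holds along every Chabauty-convergent subsequence of the original, it holds for the full sequence. The main obstacle is the appeal to Canary's Covering Theorem (or the variant in the appendix) to produce a degenerate end of $\CO_\infty$ when the cover $M_\infty \to \CO_\infty$ is infinite-sheeted; the volume bookkeeping used to exclude compact and cusped $\CO_\infty$ plays an essential role there.
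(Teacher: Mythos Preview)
Your proof is correct and follows essentially the same route as the paper: pass to a Chabauty limit $\Gamma_\infty$ of the orbifold groups, use non-virtual-abelianness of $\Lambda_\infty$ to conclude discreteness, obtain Gromov--Hausdorff convergence $\CO_i \to \CO_\infty$, invoke the Thurston--Canary covering theorem to control the degenerate end under the cover $M_\infty \to \CO_\infty$, and then transport large-volume/small-frontier regions into $\CO_i$ to apply Proposition~\ref{easyBuser}.

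Two small points where the paper is slightly more efficient. First, the paper does not separately exclude the cusped finite-volume case for $\CO_\infty$: once $\CO_\infty$ is non-compact, it uses only the conclusion that $\pi: M_\infty \to \CO_\infty$ is \emph{finite-to-one on a neighborhood of the degenerate end of $M_\infty$} (the exceptional fibered alternative in Canary's theorem is ruled out exactly as you say, by $\inj(M_\infty)>0$ and non-compactness). Second, the paper builds its region $U$ \emph{upstairs} in $M_\infty$ as the slab between \emph{two} bounded-diameter level surfaces $S_1,S_2$ and then projects; the frontier of $\pi(U)$ then splits into two bounded-diameter sets, and compactness of $U$ is automatic. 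Your choice of $U_n\subset\CO_\infty$ with frontier a \emph{single} level surface tacitly assumes a single level surface bounds a compact region in $\CO_\infty$, which fails if $\CO_\infty$ has other ends; using two surfaces as the paper does (or explicitly controlling the other ends) closes this gap.
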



\begin{proof}
By Theorem \ref{GHvsCH}, we can assume that $M_i\cong \BH^3 / \Gamma_i$ and $\Gamma_i \rightarrow \Gamma_\infty$ in the Chabauty topology. The orbifolds $\CO_i$ are then isomorphic  to $\BH^3 / O_i$ for some discrete subgroups $O_i \subset \Isom(\BH^3)$ with $\Gamma_i \subset O_i$. Passing to a subsequence, we can assume that $(O_i)$ converges in the Chabauty topology to some closed subgroup $O_\infty \supset \Gamma_\infty$. Since $\Gamma_\infty$ is not virtually abelian, neither is $O_\infty$; so Fact \ref{degeneracy} implies that $O_\infty$ is discrete. Therefore, $(\CO_i)$ converges in the Gromov-Hausdorff topology to an orbifold $\CO_\infty= \BH^3 /O_\infty$ covered by $M_\infty$.   

Since the orbifolds $(\CO_i)$ are distinct, $\CO_\infty$ cannot be compact; the Thurston-Canary Covering Theorem \cite{Canary-covering} then states that the given degenerate end of $M_\infty$ has a neighborhood $E$ on which the orbifold covering map $\pi: M_\infty \to \CO_\infty$ is finite to one. Since $\inj(M_\infty) > 0$, Fact \ref{exitingsurfaces} gives a sequence of embedded level surfaces exiting $E$  that have uniformly bounded diameters. In particular, there are two embedded surfaces $S_1, S_2 \subset E$ that have diameter less than some fixed $K > 0$ and that bound a submanifold $U \subset M_\infty$ with volume bigger than any given $V >0$. 

The projection $\pi(U) \subset \CO_\infty$ has volume at least $\frac{V}{m}$, where $m $ is an upper bound for the degree of $\pi |_E$. The frontier of $\pi(U)$ is contained in the union of $\pi(S_1)$ and $ \pi(S_2)$; it therefore splits into two subsets, each with diameter at most $K $.

Let $\phi_i : \pi(U) \to \CO_i$ be a sequence of almost isometric maps coming from the Gromov-Hausdorff convergence $\CO_i \to \CO_\infty$. For large $i$, the volume of $\phi_i \circ \pi(U)$ is at least, say, $\frac{V}{2m}$ and its frontier partitions into two components of diameter less than $2K$. Also, $\vol(\CO_i) \to \infty$ so for large $i$ $$\vol(\CO_i \setminus \phi_i \circ \pi(U)) > \frac{V}{2M}.$$ Proposition \ref{easyBuser} then gives for large $i$: $$ \lambda_1(\CO_i) \leq \frac{2\CV_{2K}}{\frac{V}{2m} -2\CV_{2K}},$$ where $\CV_{2K}$ is the volume of a ball in $\BH^3$ of radius $2K$.  However, $V$ can be chosen arbitrarily large at the expense of increasing $i$, so $\lambda_1(\CO_i) \to 0$ as $i \to \infty$. 
\end{proof}

\section{Short graphs in manifolds with bounded rank}
\label{carriersec}
This section concerns \it carrier graphs\rm: technical tools that facilitates a geometric understanding of $\rank(\pi_1 M)$.   We first define them and record a few key properties, and then use them to study sequences of hyperbolic $3$-manifolds with bounded rank and injectivity radius.  Carrier graphs were first introduced by White in \cite{White}; variations of the techniques used here have been earlier exploited in \cite{Ian-rank,Juan-rank}. 

Let $M$ be a closed hyperbolic 3-manifold. A {\em carrier graph} consists of a metric graph $X$ and a 1-Lipschitz map
\begin{equation}\label{eq:carrier}
f:X\to M
\end{equation}
such that the induced homomorphism $f_*:\pi_1(X)\to\pi_1(M)$ is surjective.  The {\em length} of a subgraph $Y\subset X$ is defined to be the sum of the lengths of the edges it contains. 

Using the Arzela-Ascoli theorem, it is not hard to see that the set of carrier graphs with bounded total length in a given closed hyperbolic $3$-manifold is compact. In particular, there is one in each such $3$-manifold which has minimal length. In \cite{White}, White observed that these graphs have controlled geometry; for instance, they are trivalent with geodesic edges.  He used this to prove that if $f : X \to M $ is a {\em minimal length carrier graph} then $X$ has a circuit whose edge-length sum is bounded by some function of $\rank(\pi_1 X)$. 

In \cite{Ian-rank}, the first author extended White's result as follows:

\begin{prop}[Chains of bounded length]\label{prop:chains}
Let $M$ be a closed hyperbolic 3-manifold and $f:X\to M$ a minimal length carrier graph. Then we have a sequence of possibly disconnected subgraphs
$$\emptyset=Y_0\subset Y_1\subset\dots\subset Y_n=X$$
such that the length of any edge in $Y_{i+1}\setminus Y_i$ is bounded from above by some constant depending only on $\inj(M)$, $\rank(\pi_1X)$, $\length(Y_i)$ and the diameters of the convex cores of the covers of $M$ corresponding to $f_*(\pi_1(Y_i^j))$, where $Y_i^1,\dots,Y_i^{n_i}$ are the connected components of $Y_i$. Moreover, the number $n $ of subgraphs in the chain is bounded above by $3 (\rank (\pi_1 X)-1) $.
\end{prop}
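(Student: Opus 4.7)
The plan is to build the chain by sorting the edges of $X$ according to length. A minimum length carrier graph, by White's analysis, may be taken to be trivalent with geodesic edges, so if its components have ranks $k_1,\dots,k_c$ then it has exactly $\sum_j 3(k_j - 1) = 3(\rank(\pi_1 X) - c) \leq 3(\rank(\pi_1 X) - 1)$ edges. Accordingly, I would list the edges of $X$ as $e_1,\dots,e_N$ with $\ell(e_1)\leq\dots\leq\ell(e_N)$, set $Y_i$ to be the subgraph spanned by $e_1,\dots,e_i$ together with all vertices of $X$ (isolated vertices allowed), and note that then $Y_{i+1}\setminus Y_i = \{e_{i+1}\}$ and $n = N \leq 3(\rank(\pi_1 X) - 1)$ automatically. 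The entire content of the proposition is the inductive length bound on the single edge $e_{i+1}$.

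Write $Y_i^1,\dots,Y_i^{n_i}$ for the components of $Y_i$ and $M_i^j \to M$ for the cover of $M$ corresponding to $f_*(\pi_1(Y_i^j))$. Since $f|_{Y_i^j}$ is $1$-Lipschitz and each loop in $Y_i^j$ is freely homotopic to a closed geodesic inside $CC(M_i^j)$, the lift of $f(Y_i^j)$ to $M_i^j$ lies in a $\length(Y_i^j)$-neighborhood of $CC(M_i^j)$, and in particular inside a region $R_i^j$ of diameter $D_i^j \leq 2\length(Y_i^j) + \diam CC(M_i^j)$. The argument for the length bound then proceeds by contradiction. Assume $\ell(e_{i+1})$ exceeds some explicit constant $C$ depending on $\max_j D_i^j$, $\inj(M)$ and $\rank(\pi_1 X)$. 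In the main case, where both endpoints $v_0,v_1$ of $e_{i+1}$ lie on a single component $Y_i^j$, I would lift $f\circ e_{i+1}$ to a path $\tilde e$ in $M_i^j$ starting at a chosen lift of $v_0$; both endpoints of $\tilde e$ lie in $R_i^j$. A compactness argument in the $\epsilon$-thick part of $M_i^j$, whose topological complexity is controlled by $\rank(\pi_1 X)$, yields a path in the same relative homotopy class as $\tilde e$ whose length is bounded by an explicit function of $D_i^j$, $\inj(M)$ and $\rank(\pi_1 X)$. Projecting this shorter path to $M$ and straightening to a geodesic produces an edge in the same class modulo $f_*(\pi_1(Y_i^j))$, which when substituted for $e_{i+1}$ yields a new carrier graph of strictly smaller total length, contradicting minimality of $f$. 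The remaining cases — endpoints on different components of $Y_i$, or on vertices not yet incident to $Y_i$ — reduce to this one by first passing to the cover corresponding to the union of the relevant components and using trivalence to bound the number of long edges that can meet any given vertex.

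The principal obstacle is the bounded-length replacement step: producing a short path in a prescribed relative homotopy class between two points of a region of bounded diameter in a hyperbolic $3$-manifold. The injectivity radius hypothesis is indispensable here, since without a thickness assumption a Margulis tube could force every representative of a given homotopy class to be arbitrarily long. The bound on $\rank(\pi_1 X)$ is equally essential: it controls the topological complexity of the covers $M_i^j$ (via $\rank(\pi_1 M_i^j) \leq \rank(\pi_1 X)$), hence the set of relative homotopy classes that must be considered, which is what allows one to extract a uniform length bound for the replacement path.
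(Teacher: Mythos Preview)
The paper does not prove this proposition; it is quoted from \cite{Ian-rank}. So there is no in-paper argument to compare against, but your sketch does not match the proof there and contains a genuine gap.

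Your chain construction and edge count are fine. The problem is the replacement step. You lift $f\circ e_{i+1}$ to a path $\tilde e$ in $M_i^j$ starting at $\tilde v_0=\tilde f(v_0)$ and assert that \emph{both} endpoints of $\tilde e$ lie in the bounded region $R_i^j$. The starting point does; the terminal point $\tilde w$ need not. The point $\tilde w$ is the lift of $f(v_1)$ determined by path-lifting $e_{i+1}$, and there is no reason this should be the lift $\tilde f(v_1)$ or lie anywhere near $CC(M_i^j)$. (Take $f_*(\pi_1 Y_i^j)$ cyclic: then $M_i^j$ is a solid-torus cover, $R_i^j$ is a bounded tube about the core geodesic, and the preimages of $f(v_1)$ are scattered arbitrarily far from the core.) If instead you replace $e_{i+1}$ by a short arc whose lift runs from $\tilde v_0$ to $\tilde f(v_1)$, then $e'\cdot\alpha^{-1}$ already lies in $f_*(\pi_1 Y_i^j)$, the generator contributed by $e_{i+1}$ is lost, and the new graph is no longer a carrier graph. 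Your subsequent ``compactness argument \dots\ in the same relative homotopy class'' cannot repair this: $\tilde e$ is a geodesic, hence shortest in its class rel endpoints, so nothing shorter exists there; what you would need is a bound on $d_{M_i^j}(\tilde v_0,\tilde w)$, and that is exactly the unproved endpoint claim.

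The argument in \cite{Ian-rank} is organized differently. One works in $\BH^3$ and, for each component $Y_i^j$, considers the convex hull $\hat C_j$ of the limit set of $f_*(\pi_1 Y_i^j)$; the lift of $f(Y_i^j)$ lies in a $\length(Y_i)$-neighborhood of $\hat C_j$. One then defines the length of an edge \emph{relative to} $Y_i$ to be the length of the portion of its lift lying outside (neighborhoods of) the relevant $\hat C_j$'s, and proves --- by a version of White's argument, now treating these convex hulls as ``fat vertices'' and using the $2\pi/3$ angle condition at vertices of a minimal carrier graph --- that some edge outside $Y_i$ has bounded relative length. Its absolute length is then at most the relative length plus contributions from $\length(Y_i)$ and the convex-core diameters. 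The chain is built by adding such an edge at each stage, not by a global sort of edge lengths.
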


Suppose now that a closed hyperbolic $3$-manifold $M$ has $\rank \pi_1 M \leq k$ and $\inj(M) \geq \epsilon$. Then Proposition \ref{prop:chains} shows that there is an upper bound $C(\epsilon, k)$ for the length of a minimal length carrier graph $X \to M$, unless one of the subgraphs $Y_i$ is associated to a large convex core. This simple idea, plus  some bookkeeping, gives the following useful lemma.

\begin{lem}\label{prop:graphs}
Assume that $(M_i)$ is a sequence of pairwise distinct closed hyperbolic $3$-manifolds with $\inj (M_i) \geq \epsilon $ and $\rank (\pi_1 (M_i)) \leq k $. Then there are a constant $L$ and a sequence $(Y_i)$ of metric graphs with 1-Lipschitz maps $(f_i:Y_i\to M_i)$ such that
\begin{enumerate}
\item $\rank(\pi_1(Y_i))\le k$,
\item $\length(Y_i)\le L$, and
\item $\lim_{i\to\infty}\diam(CC(\BH^3/(f_i)_*(\pi_1Y_i)))=\infty$.
\end{enumerate}
Here $CC(\BH^3/(f_i)_*(\pi_1 Y_i))$ is the convex core of the cover of $M_i$ corresponding to the image of the homomorphism $(f_i)_*:\pi_1Y_i \to\pi_1M_i$.
\end{lem}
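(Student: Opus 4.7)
The plan is to run through each minimal length carrier graph $f_i : X_i \to M_i$ and peel off the chain of subgraphs supplied by Proposition \ref{prop:chains} one level at a time, stopping at the first level whose associated convex cores blow up in diameter.  Concretely, for each $i$ let
$$\emptyset = Y_{i,0}\subset Y_{i,1}\subset\cdots\subset Y_{i,n_i} = X_i, \qquad n_i\le 3(k-1),$$
be the chain provided by Proposition \ref{prop:chains} applied to a minimal length carrier graph $f_i$.  After passing to a subsequence assume $n_i = n$ is constant, and set
$$D_{i,j} \;=\; \max_{l}\,\diam\bigl(CC(\BH^3/(f_i)_*(\pi_1 Y_{i,j}^l))\bigr),$$
the maximum taken over the components $Y_{i,j}^l$ of $Y_{i,j}$, with $D_{i,0}=0$.

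Before extracting the critical level, I would first observe that the thick-3-manifold version of Wang's finiteness theorem recalled in the introduction says that only finitely many closed hyperbolic 3-manifolds $M$ satisfy $\inj(M)\ge\epsilon$ and $\vol(M)\le V$.  Since the $M_i$ are pairwise distinct, $\vol(M_i)\to\infty$; because $\vol(M_i)$ is at most the volume of a ball in $\BH^3$ of radius $\diam(M_i)$, we also have $\diam(M_i)\to\infty$.  In particular, since each $M_i$ is its own convex core, $D_{i,n}=\diam(M_i)\to\infty$.

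By pigeonhole there is a smallest $j^*\in\{1,\dots,n\}$ for which $D_{i,j^*}\to\infty$ after a further subsequence, and by minimality of $j^*$ we can also arrange that $D_{i,j}\le C$ for all $j<j^*$ and some constant $C$.  I then claim by induction on $j\le j^*$ that $\length(Y_{i,j})$ is bounded by a constant independent of $i$.  The case $j=0$ is trivial.  For the inductive step, recall from \cite{White} that minimal length carrier graphs are trivalent, so $X_i$ contains at most $3(k-1)$ edges and the same bound applies to $Y_{i,j+1}\setminus Y_{i,j}$; each such edge has length controlled by Proposition \ref{prop:chains} in terms of $\epsilon$, $k$, the inductive bound on $\length(Y_{i,j})$, and $C$.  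Setting $j=j^*$ yields a uniform constant $L$ with $\length(Y_{i,j^*})\le L$.

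Finally, let $Y_i$ be any connected component of $Y_{i,j^*}$ realizing $D_{i,j^*}$, equipped with the restriction of $f_i$.  Then (2) follows from the length bound above, (3) from the choice of $j^*$, and (1) because the inclusion of a subgraph into a graph is injective on $H_1$, so $\rank(\pi_1 Y_i)\le\rank(\pi_1 X_i)\le k$.  The main subtlety is the coupling in Proposition \ref{prop:chains} between subgraph length and convex-core diameter: one cannot directly bound $\length(X_i)$, so the argument must instead locate the first level of the chain where control is lost, exploiting the fact that Proposition \ref{prop:chains} bounds the new edge lengths only in terms of the \emph{previous} level's convex-core diameters, which is precisely what keeps $\length(Y_{i,j^*})$ controlled while $D_{i,j^*}$ escapes to infinity.
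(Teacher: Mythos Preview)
Your proof is correct and follows essentially the same approach as the paper's: both use the chain from Proposition~\ref{prop:chains}, Wang's finiteness to get $\diam(M_i)\to\infty$, and the bounded chain length to locate a critical level. The only difference is organizational: the paper tracks where the \emph{lengths} $\length(Y_{i,j})$ first blow up and then deduces that the convex-core diameters at the previous level must diverge, whereas you track where the convex-core diameters $D_{i,j}$ first blow up and deduce that the lengths up to that level stay bounded---a dual but equivalent bookkeeping that avoids the paper's case split on whether $\length(X_i)$ is bounded.
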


\begin{proof}[Proof of Lemma \ref{prop:graphs}]
To begin with fix $\epsilon$, $k$ and a sequence of hyperbolic 3-manifolds $(M_i)$ as in the statement of the lemma.  Choose for each $i $ a minimal length carrier graph
$$f_i:X_i\to M_i$$
with $\rank(\pi_1(X_i))=\rank(\pi_1(M_i))=k$. 

Assume for the moment that the sequence $(\length(X_i))$ is bounded from above by some positive number $L$. In other words, the graphs $X_i$ themselves satisfy (1) and (2). On the other hand, we have by definition that $(f_i)_*(\pi_1(X_i))=\pi_1(M_i)$ and hence
$$CC(\BH^3/(f_{i})_*(\pi_1(X_{i})))=M_i$$
Since the sequence $(M_i)$ consists of pairwise distinct manifolds with $\inj(M_i)\ge\epsilon$ we obtain, for example, from Wang's finiteness theorem that $\vol(M_i)\to\infty$. The injectivity radius bound then implies that $\diam(M_i) \to \infty $ as well.  This means that the carrier graphs $f_i:X_i\to M_i$ themselves satisfy also (3). This concludes the proof if the sequence $(\length(X_i))$ is bounded. 

We treat now the general case. In the light of the above, we may assume without loss of generality that $\length(X_i)\to\infty$. Consider for each $i$ the chain 
\begin{equation}\label{eq:chain}
\emptyset=Y_0^i\subset Y_1^i\subset\dots\subset Y_{n_i}^i=X_i
\end{equation}
provided by Proposition \ref{prop:chains}. Since $\length(Y_0^i)=0$, $\length(X_i)\to\infty$ and the length $n_i $ of each chain is bounded independently of $i $, we can choose a sequence $(m_i)$ with
\begin{itemize}
\item[(a)] $0\le m_i\le n_i-1$,
\item[(b)] $\limsup_{i\to\infty}\length(Y^i_{m_i})<\infty$, and
\item[(c)] $\lim_{i\to\infty}\length(Y^i_{m_i+1})=\infty$.
\end{itemize}
Observe that by condition (b), any of the connected components $Z_1^i,\dots,Z_{r_i}^i$ of $Y^i_{m_i}$ satisfies (1) and (2) for any $L<\infty$ with  
$$\limsup_{i\to\infty}\length(Y^i_{m_i})<L$$
By Proposition \ref{prop:chains}, $\length(Y^i_{m_i+1})$ is bounded in terms of $k$, $L$ and 
$$\max_{j=1,\dots,r_i}\{\diam(CC(\BH^3/(f_i)_*(\pi_1(Z^i_j))))\}$$
Since $\length(Y^i_{m_i+1})$ tends to $\infty$ by condition (c), we obtain that there is a sequence of component of $Y_{m_i}^i$, say $Z_1^i$, with
$$\lim_i\diam(CC(\BH^3/(f_i)_*(\pi_1(Z^i_1))))=\infty$$
In other words, the sequence of maps $f_i\vert_{Z_1^i}:Z^i_1\to M_i$ satisfies (3). This concludes the proof of Lemma \ref{prop:graphs}
\end{proof}

\section{Limits of Thick Manifolds with Bounded Rank}

The main result of this section is that any sequence of pairwise distinct hyperbolic 3-manifolds with injectivity radius bounded from below and whose fundamental group has rank bounded from above has a Gromov-Hausdorff limit with a degenerate end.  Even better,
 
\begin{prop}
\label{lem:degenerate}
Assume that $(M_i)$ is a sequence of pairwise distinct hyperbolic 3-manifolds with $\inj(M_i)\ge\epsilon$ and $\rank(\pi_1(M_i))\le k$. Then there are points $x_i\in M_i$ such that, up to passing to a subsequence, the pointed manifolds $(M_i,x_i)$ converge in the pointed Gromov-Hausdorff topology to a pointed hyperbolic $3$-manifold $(M_\infty,x_\infty)$ homeomorphic to $\Sigma \times \BR $ that has two degenerate ends.  Here, $\Sigma $ is a closed, orientable surface with genus at most $k $.
\end{prop}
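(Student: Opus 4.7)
The plan is to use Lemma \ref{prop:graphs} to produce short carrier subgraphs whose associated covers have long convex cores, base-point near the middle of each such core, and extract a pointed Gromov-Hausdorff limit whose structure I identify via tameness and the rank bound.

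Concretely, Lemma \ref{prop:graphs} yields $1$-Lipschitz maps $f_i : Y_i \to M_i$ with $\rank(\pi_1 Y_i) \le k$, $\length(Y_i) \le L$ for some uniform $L$, and $D_i := \diam CC(N_i) \to \infty$, where $N_i = \BH^3 / H_i$ is the cover of $M_i$ associated to $H_i := (f_i)_*(\pi_1 Y_i)$. I would pick $p_i^\pm \in CC(N_i)$ realizing the diameter $D_i$, let $y_i$ be the midpoint of the geodesic $[p_i^-, p_i^+]$, and let $x_i \in M_i$ be its projection. Since $\inj(N_i) \ge \inj(M_i) \ge \epsilon$, Corollary \ref{thickcompact} together with a diagonal argument yields a subsequence along which both $(M_i, x_i) \to (M_\infty, x_\infty)$ and $(N_i, y_i) \to (N_\infty, y_\infty)$ in the pointed Gromov-Hausdorff topology; Theorem \ref{GHvsCH} then provides corresponding Chabauty limits $\Gamma_i \to \Gamma_\infty$ and $H_i \to H_\infty \le \Gamma_\infty$, inducing a covering $N_\infty \to M_\infty$.

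Next I would extract geometric information about $N_\infty$. The generators of $H_i$ have translation length at most $2L$ near a lift of $y_i$, so Arzela-Ascoli applied to the $f_i$ produces a $1$-Lipschitz limit $f_\infty : Y_\infty \to N_\infty$ whose image generates $\pi_1 N_\infty$, giving $\rank(\pi_1 N_\infty) \le k$. The geodesic segments $[p_i^-, p_i^+]$ through $y_i$ have length $D_i \to \infty$ and pass through $y_\infty$ in the limit, producing a bi-infinite geodesic of $CC(N_\infty)$ through $y_\infty$; hence $H_\infty$ is non-elementary, and Fact \ref{degeneracy} shows $H_\infty$ is discrete and torsion-free. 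The limit convex core therefore extends to infinity in two opposite directions from $y_\infty$, so by the Tameness Theorem $N_\infty$ must have at least two degenerate ends. Combined with the rank bound and Fact \ref{exitingsurfaces}, a Scott compact-core plus Euler-characteristic argument should force the core of $N_\infty$ to be a trivial $I$-bundle over a single closed orientable surface $\Sigma$ of genus at most $k$, giving $N_\infty \cong \Sigma \times \BR$ with both ends degenerate.

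The main obstacle is identifying the closed-manifold limit $M_\infty$ with the cover $N_\infty$, i.e.\ showing that the covering $N_\infty \to M_\infty$ is trivial. If not, there would exist $\gamma_\infty \in \Gamma_\infty \setminus H_\infty$ arising as a Chabauty limit of elements $\gamma_i \in \Gamma_i \setminus H_i$ with bounded translation length near the lift of $y_i$; such $\gamma_i$ would yield uniformly short loops in $M_i$ representing classes outside $H_i$. To rule this out I would leverage the minimality of the carrier graph $X_i$ from which $Y_i$ was extracted via Proposition \ref{prop:chains} — any such short extra loop could be appended to $Y_i$ to contradict the control provided there — together with an appropriate version of Thurston's Covering Theorem applied to the degenerate ends of $N_\infty$. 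Once $M_\infty \cong N_\infty$ is established, the conclusion follows.
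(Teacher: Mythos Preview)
Your overall strategy has the right ingredients, but two of the key steps fail as written, and the route you take to the doubly-degenerate conclusion diverges from (and is harder than) the paper's.

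First, your basepoint choice is incompatible with your rank claim. You place $y_i$ at the midpoint of a diameter-realizing segment in $CC(N_i)$, but the lift of the carrier graph to $N_i$ has diameter at most $L$ and sits \emph{somewhere} in a convex core of diameter $D_i \to \infty$; there is no reason it lies near $y_i$. If it does not, the generators of $H_i$ do \emph{not} have translation length $\le 2L$ at a lift of $y_i$, the Arzela--Ascoli step collapses, and the Chabauty limit $H_\infty$ may well be trivial (so $N_\infty=\BH^3$). The paper instead takes $x_i=f_i(y_i)$ for $y_i\in Y_i$, which is exactly what makes the representations $\rho_i:\BF_m\to\Isom(\BH^3)$ precompact and produces a finitely generated algebraic limit $\tilde M_\infty$ covering $M_\infty$.

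Second, the bi-infinite geodesic does not give two degenerate ends, or even one. Under pointed convergence the segments $[p_i^-,p_i^+]$ yield a complete geodesic through $y_\infty$, but that geodesic may be recurrent in $N_\infty$, and even if both rays escape to infinity they can exit the \emph{same} end; nothing here forces $CC(N_\infty)$ to be noncompact in two distinct ends. The paper avoids this entirely: it shows $\tilde M_\infty$ has \emph{one} degenerate end by contradiction (were it convex-cocompact, Marden's stability theorem would bound $\diam CC(\BH^3/\rho_i(\BF_m))$, contradicting Lemma~\ref{prop:graphs}), then invokes Canary's covering theorem to push a degenerate end down to $M_\infty$, with the genus bound coming from ``half lives, half dies''. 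The upgrade to a doubly degenerate $\Sigma\times\BR$ limit is obtained afterwards by re-basing at points marching out the degenerate end of $M_\infty$ (Lemma~\ref{lem:degenerate2}) and a diagonal argument --- not by proving the cover $N_\infty\to M_\infty$ is trivial, which you correctly flag as the hardest step of your plan and which is neither needed nor obtainable from the minimality of $X_i$ in the way you sketch.
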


Before beginning its proof, we deduce from it Theorem \ref{thm:main}:

\begin{named}{Theorem \ref{thm:main}}
For every $\epsilon,\delta,k>0$, there are only finitely many isometry types of closed hyperbolic 3-manifolds $M$ with $\rank (\pi_1(M)) \leq k$, $\inj(M)\ge\epsilon$ and $\lambda_1(M)\ge\delta$.
\end{named}
\begin{proof}
Seeking a contradiction, suppose that there is a sequence of pairwise distinct hyperbolic 3-manifolds $(M_i)$ satisfying the assumptions of the theorem. By Proposition \ref{lem:degenerate} we can find points $x_i\in M_i$ such that, up to passing to a subsequence, the pointed manifolds $(M_i,x_i)$ converge in the pointed Gromov-Hausdorff topology to a manifold $(M_\infty,x_\infty)$ with a degenerate end. By Proposition \ref{prop:lambda-to-0}, we have $\lambda_1(M_i)\to 0$. This is a contradiction.
\end{proof}
 
It remains to prove Proposition \ref{lem:degenerate}. 

\begin{proof}[Proof of Proposition \ref{lem:degenerate}]
For each $i$, let $f_i:Y_i\to M_i$ be a sequence of graphs as provided by Lemma \ref{prop:graphs}.  Choose base points $y_i\in Y_i$ and set $x_i=f_i(y_i)\in M_i$.  Then since each $\inj(M_i) \geq \epsilon$, by Corollary \ref{thickcompact} we can pass to a subsequence so that $(M_i,x_i)$ converges in the Gromov-Hausdorff topology to some pointed hyperbolic 3-manifold $(M_\infty,x_\infty)$.

The assumption that the manifolds $M_i$ are pairwise distinct implies that $M_\infty$ is not compact. In particular, in order to show that it has a degenerate end, it suffices by Canary's extension of Thurston's covering theorem \cite{Canary-covering} to find a manifold $\tilde M_\infty$ which has a degenerate end and covers $M_\infty$. This is our goal. 

Passing to a subsequence, we may assume that each $\pi_1(Y_i,y_i)$ is isomorphic to the free group $\BF_m$ for some $m\le k$. There are then homomorphisms 
$$ \xymatrix{ \BF_m \ar[r]^-\cong & \pi_1(Y_i, y_i) \ar [r]^{(f_i)_*}  & \pi_1(M_i, x_i) \ar[r] & \Isom(\BH^3),}$$ but the first and last arrows are not canonically defined. However, since each $\length(Y_i) \leq L$, we may choose the first identification so that each element of the standard basis for $\BF_m$ is represented by a loop based at $y_i$ of length at most $2L$. For the last map, by Theorem  \ref{GHvsCH} there is a convergent sequence of groups $\Gamma_i \to \Gamma_\infty$ with $M_i \cong \BH^3 / \Gamma_i$ such that a fixed basepoint $x_{\BH^3} \in \BH^3$ projects to each $x_i$, for $i=1, \ldots , \infty$. We choose the last map $\phi: \pi_i(M_i, x_i) \to \Gamma_i$ so that a loop representing $\gamma \in \pi_1(M_i, x_i)$ lifts to a path in $\BH^3$ joining $x_{\BH^3}$ to $[\phi(\gamma)](x_{\BH^3})$. 

Composing yields a representation $$\rho_i:\BF_m\to\Isom(\BH^3), \ \ \ \rho_i(\BF_m)=\Gamma_i  \text{ and }   \BH^3 / \Gamma_i \cong M_i$$ such that if $e_j\in\BF_m$ is an element of the standard basis then for all $i$ we have
$$d_{\BH^3}((\rho_i(e_j))(x_{\BH^3}),x_{\BH^3})\le 2L.$$
	This implies that, up to passing to a further subsequence, the sequence of representations $(\rho_i)$ converges pointwise to a representation $\rho_\infty :\BF_m \to \Isom(\BH^3)$.  Since $\rho_i(\BF_m) = \Gamma_i $ and $\Gamma_i \to \Gamma_\infty$ in the Chabauty topology, it follows immediately from the definition given in Section \ref{geosec} that $\rho_\infty(\BF_m) \subset \Gamma_\infty$. In particular, $\rho_\infty(\BF_m)$ is discrete and the manifold $\tilde M_\infty=\BH^3/\rho_\infty(\BF_m)$ covers $M_\infty$. Observe that $\inj(\tilde M_\infty)\ge\epsilon$ and hence $\rho_\infty(\BF^m)$ does not contain parabolic elements.

We claim that $\tilde M_\infty$ has a degenerate end.  To begin with, its fundamental group is isomorphic to $\rho_\infty (\BF^m)$ and is therefore finitely generated.  The Tameness Theorem of Agol \cite{Agol-tameness} and Calegari-Gabai \cite{Calegari-Gabai} then implies that $\tilde M_\infty $ is homeomorphic to the interior of a compact $3$-manifold.  It then follows from Canary \cite{Canary-tameness} that either $\tilde M_\infty $ has a degenerate end or its convex core $CC (\tilde M_\infty) $ is compact.  Assuming the latter, Marden's stability theorem \cite{Marden74} implies then that there are bi-Lipschitz maps (defined for large enough $i$)
$$\tilde\phi_i:\BH^3/\rho_\infty(\BF_m)\to\BH^3/\rho_i(\BF_m)$$
whose bi-Lipschitz constants tends to $1$. This implies that 
$$\lim_{i\to\infty}\diam(CC(\BH^3/\rho_i(\BF_m)))=\diam(CC(\BH^3/\rho_\infty(\BF_m)))<\infty$$
contradicting that by Lemma \ref{prop:graphs} we have
$$\lim_{i\to\infty}\diam(CC(\BH^3/\rho_i(\BF_m)))=\lim_{i\to\infty}\diam(CC(\BH^3/(f_i)_*(\pi_1(Y_i))))=\infty$$

We have shown that the algebraic limit $\tilde M_\infty$ has a degenerate end $\CE $. One can bound the topology of $\CE $ in terms of $k $ as follows.  First, note that $\CE $ has a neighborhood homeomorphic to $\Sigma \times (0,\infty) $, where $\Sigma $ is a boundary component of some compact $3$-manifold $M $ with interior $\tilde M_\infty $.  The genus of a component of $\partial M $ is at most half the abelian rank (number of $\BZ $-summands) of $H_1 (\partial M) $.  The 'half lives, half dies' philosophy (e.g. \cite [Lemma 3.5]{Hatchernotes}) shows then that $g (\Sigma) $ is at most the abelian rank of $H_1 (M) $, which is at most $\rank (\pi_1 M) \leq k $.  

Canary's generalization of Thurston's covering theorem \cite {Canary-covering} implies that some neighborhood of $\CE $ finitely covers a neighborhood of a degenerate end $\CE' $ of the geometric limit $M_\infty $.  Furthermore, this end $\CE' $ has a neighborhood homeomorphic to $\Sigma' \times (0,\infty) $ for some surface $\Sigma' $ covered by $\Sigma $.  In particular, $g (\Sigma')\leq k $.  

We have now all but proved Proposition \ref {lem:degenerate}: our Gromov-Hausdorff limit $M_\infty $ has a degenerate end with topology bounded by $k $, but it may not be homeomorphic to a product.  However, combining Lemma \ref {lem:degenerate2} below with a diagonal argument remedies the situation and finishes the proof of Proposition \ref {lem:degenerate}.
\end{proof}

The following lemma is certainly well-known to experts, but it is not necessarily well-written.  So, we include a full proof here.

\begin{lem}\label{lem:degenerate2}
Assume that $M$ is a hyperbolic $3$-manifold with $\inj(M) > 0$ that has a degenerate end $\CE$ with a neighborhood $E \cong \Sigma \times \BR$.  Then if a sequence of points $(p_i)$ exits a degenerate end of $M$, any pointed Gromov-Hausdorff limit of any subsequence of $(M, p_i)$ is homeomorphic to $\Sigma \times \BR$ and has two degenerate ends.
\end{lem}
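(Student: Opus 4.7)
The plan is to use the bounded-diameter level surfaces provided by Fact \ref{exitingsurfaces} (available because $\inj(M) > 0$) to exhibit any Gromov-Hausdorff limit $M_\infty$ as a bi-infinite stack of product regions $\Sigma \times [0,1]$ glued along copies of $\Sigma$.

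First, I would set up level surfaces around each $p_i$. Since $(p_i)$ exits the degenerate end $\CE$ and $\inj(M) > 0$, Fact \ref{exitingsurfaces} provides a constant $D$ such that for every $n \in \BZ$ and every sufficiently large $i$, there is an embedded level surface $T_i^n \subset M$ homotopic to $\Sigma$, with $\diam(T_i^n) \leq D$, lying at approximate signed distance $n$ from $p_i$ inside the product neighborhood $E$. After minor perturbations one may arrange these to be pairwise disjoint and to cobound compact submanifolds $W_i^n \cong \Sigma \times [0,1]$ with $\partial W_i^n = T_i^n \sqcup T_i^{n+1}$.

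Next, I would pass to a convergent subsequence $(M, p_i) \to (M_\infty, p_\infty)$ with almost-isometric maps $\phi_i : K_i \to M$ coming from an exhaustion of $M_\infty$ as in Theorem \ref{GHvsCH}. For each fixed $n$, the pullbacks $\phi_i^{-1}(T_i^n)$ and $\phi_i^{-1}(W_i^n)$ sit in a fixed compact subset of $M_\infty$. After replacing each $T_i^n$ by a least-area representative in its isotopy class (to ensure bounded second fundamental form via standard minimal surface theory in a manifold of positive injectivity radius), the pullbacks should converge in $C^\infty$ along a subsequence to an embedded minimal surface $S^n \cong \Sigma$ bounding a compact region $W_\infty^n \cong \Sigma \times [0,1]$. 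A diagonal argument would then produce such limits simultaneously for all $n \in \BZ$.

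Finally, I would identify $M_\infty$ as a product and detect its degenerate ends. The union $\bigcup_n W_\infty^n$ is open in $M_\infty$ and contains every ball about $p_\infty$ (since the ball of radius $R$ about $p_i$ eventually lies in the union of the $W_i^n$ with $|n| \leq R$), hence it equals $M_\infty$. Gluing the $W_\infty^n$ along the $S^n$ gives $M_\infty \cong \Sigma \times \BR$, with the surfaces $\{S^n\}_{n > 0}$ and $\{S^n\}_{n < 0}$ exiting the two ends. Each $S^n$ has area at most $2\pi|\chi(\Sigma)|$ inherited from the bound on $T_i^n$, which rules out convex cocompactness (whose ends have exponentially growing area cross sections) and shows that both ends are degenerate. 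The hardest step will be the $C^\infty$ convergence of the pullback surfaces: the raw level surfaces from Fact \ref{exitingsurfaces} need not have controlled curvature, so the minimal-replacement argument---verifying that the replacements still sit inside bounded-diameter regions and that the limit surfaces $S^n$ are embedded and pairwise disjoint so that the regions $W_\infty^n$ really glue into a product---will be the technical heart of the proof.
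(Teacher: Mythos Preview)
Your overall strategy---exhibit $M_\infty$ as a bi-infinite stack of product regions $\Sigma \times [0,1]$---matches the paper's, but the mechanism you propose for producing those regions is substantially harder than what is needed, and the step you flag as the ``technical heart'' introduces a genuine difficulty that the paper simply avoids.

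The paper never takes limits of surfaces. Instead, for each scale $D$ it fixes a \emph{single} large index $i$ and a single $2$-bilipschitz embedding $\phi: B_{8D}(p_\infty) \to M$ with $\phi(p_\infty) = p_i$. Working entirely inside $M$, Fact~\ref{exitingsurfaces} gives three bounded-diameter level surfaces near $p_i$, and Waldhausen's Cobordism Theorem assembles two of them into a product region $V \cong \Sigma \times [0,1]$ with $B_{2D}(p_i) \subset V \subset B_{4D}(p_i)$. Then $U := \phi^{-1}(V)$ is the desired product region in $M_\infty$; since $\phi$ is a bilipschitz embedding, $U$ is automatically an embedded copy of $\Sigma \times [0,1]$, with no convergence argument required. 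Repeating with $D$ replaced by $8D, 64D, \ldots$ produces the nested exhaustion $U_1 \subset U_2 \subset \cdots$, and a short separate argument (each $U_k$ contains a bounded-diameter level surface $S_k$ close to $p_\infty$, hence contained in every $U_j$) shows each inclusion $U_k \hookrightarrow U_{k+1}$ is a homotopy equivalence. Degeneracy of both ends is then read off from the fact that every point of $M_\infty$ lies on an essential loop of uniformly bounded length.

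By contrast, your route requires convergence of the pullbacks $\phi_i^{-1}(T_i^n)$ as $i \to \infty$. The raw level surfaces from Fact~\ref{exitingsurfaces} have no curvature control, so you propose least-area replacement; but even granting existence and Schoen-type curvature estimates, passing to a limit of embedded minimal surfaces and concluding that the limit is itself embedded---and that limits for distinct $n$ remain disjoint so the regions $W_\infty^n$ really glue to a product---is delicate: such limits can a priori acquire multiplicity or coincide. None of this is insurmountable, but it is real work, and the paper's observation that a single bilipschitz pullback already delivers an embedded product region makes the entire minimal-surface detour unnecessary.
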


\begin{proof}
Assume that $(M, p_i)$ converges to some pointed hyperbolic $3$-manifold $(M_\infty, p_\infty)$. We first show that $M_\infty \cong \Sigma \times \BR$ by constructing a nested sequence of submanifolds $U_1\subset U_2 \subset \ldots$ with $\bigcup_k U_k = M_\infty$,  each $U_k \cong \Sigma \times [0, 1]$ and each inclusion $U_k \hookrightarrow U_{k+1}$ a homotopy equivalence. By Waldhausen's Cobordism Theorem \cite[Lemma 5.1]{Waldhausen}, each component of $U_{k+1} \setminus int(U_k)$ will be homeomorphic to $\Sigma \times [0,1]$, so a gluing argument will then show $M_\infty \cong \Sigma \times \BR$. 

Assume that $E \cong \Sigma \times \BR$ is a neighborhood of $\CE$ as given by Fact \ref{exitingsurfaces}, so that within unit distance from each point of $E$ there is a level surface with diameter less than some constant $C(\epsilon)$. Choose $D > C(\epsilon)+ 1$. We will construct a submanifold $U_1$ as above with $$B_D(p_\infty) \subset U_1 \subset B_{8D} (p_\infty). $$
To do this, choose $i$ large enough so there is a $2$-bilipshitz embedding $$ \phi : B_{8D} (p_\infty) \to M$$
 sending $p_\infty$ to $p_i$. Increasing $i$ as necessary, we may also assume that $p_i \in E$ and $\dist(p_i, \partial E) > 4D$.
 
 By Fact \ref{exitingsurfaces}, there is an embedded surface $S \subset E$ with $\dist (p_i, S) \leq 1$ and $\diam(S) \leq C(\epsilon).$ Then $S$ separates $M$, and there are points $a_1, a_2$ on opposite sides of $S$ with $$ \dist(a_1, p_i) = \dist( a_2, p_i)= 3D.$$  Then both $a_1, a_2 \in E$, so there are level surfaces $X_1$ and $X_2$ in $E$ with $\dist(a_k, X_k) \leq 1$ and diameters less than $C(\epsilon)$. 
 
 The surfaces $X_1, X_2$ and $S$ are all disjoint. By Waldhausen's Cobordism Theorem \cite[Lemma 5.1]{Waldhausen}, $X_1$ and $X_2$ bound a submanifold $V \subset M$ homeomorphic to $\Sigma \times [0,1]$. Since $S$ separates $X_1$ and $X_2$, we must have $S \subset V$. Moreover, this implies that $B_{2D}(p_i)$ (which contains $S$) intersects the interior of $V$, but not $\partial V$. We conclude that $B_{2D} (p_i) \subset V \subset B_{4D} (p_i)$; setting $U_1 = \phi^{-1} (V)$ gives a submanifold $U_1 \cong \Sigma \times [0,1]$ with $$B_D (p_\infty) \subset U_1 \subset B_{8D} (p_\infty).$$
Multiplying $D$ repeatedly by 8 and performing the same argument each time gives a sequence $$U_1 \subset U_2 \subset U_3 \subset \ldots$$ with $U_k \cong \Sigma \times [0,1]$ and $ \bigcup_k U_k = M_\infty$ as desired. 

We only have to check that the inclusions $U_k \into U_{k+1}$ are homotopy equivalences. However, in each submanifold $U_k$ there is an embedded surface $S_k$ with
\begin{itemize}
\item $\dist(p_\infty, S_k) \leq 2$
\item$ \diam(S_k) \leq 2C(\epsilon)$
\item $S_k \into U_k$ a homotopy equivalence.
\end{itemize}
In the notation we used earlier, $S_k$ is $\phi^{-1}(S)$. The diameter condition and proximity to $p_\infty$ imply that each $S_k$ is contained in every $U_1, U_2, \ldots $. In particular, $S_{k+1}$ is an incompressible embedded surface in $U_k \cong \Sigma \times [0,1]$, so the inclusion $S_{k+1} \into U_k$ is a homotopy equivalence. There, the same is true for $U_k \into U_{k+1}$. 

We know now that $M_\infty \cong \Sigma \times \BR$. To see that both ends are degenerate, one can simply observe from Gromov-Hausdorff convergence and Fact \ref{exitingsurfaces} that there is some $K > 0$ ($K \geq 3C(\epsilon)$ will do) such that through every point $x\in M_\infty$ there is an essential loop with length less than $K$. This implies that both ends are degenerate \cite{Japanese}.
 \end{proof}

\section{Corollaries for Arithmetic Manifolds}
\label {arithmetic}
In this final section, we prove some additional results concerning arithmetic hyperbolic $3 $-manifolds.  Although one can consult \cite{Reid-book} for a beautiful and detailed theory of such manifolds, we will need here only the following fact:   

\begin{sat}\label{vigneras}
Every closed arithmetic hyperbolic $3$-manifold $M$ covers some hyperbolic orbifold $\CO$ with $\lambda_1 (\CO) \geq \frac34$.
\end{sat}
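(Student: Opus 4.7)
The plan is to produce a congruence arithmetic orbifold $\CO$ covered by $M$ and invoke the Burger--Sarnak bound \cite{Burger-Sarnak}. Writing $M = \BH^3/\Gamma$, arithmeticity means that $\Gamma$ is commensurable with $P(\mathcal{O}^1)$, the projectivized norm-one units of a maximal order $\mathcal{O}$ in a quaternion algebra $A$ over a number field $k$ with exactly one complex place and $A$ ramified at every real place of $k$ (see \cite{Reid-book}).

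First I would produce a maximal arithmetic lattice $\Lambda \supseteq \Gamma$. By a theorem of Borel, $\mathrm{Comm}_{\mathrm{PSL}_2(\BC)}(\Gamma)$ is canonically identified with the image of $PA^*(k)$ in $\mathrm{PGL}_2(\BC)$ through the complex place of $k$; working adelically and using strong approximation for $\mathrm{SL}_1(A)$, discrete finite-index overgroups of $\Gamma$ in this commensurator are in bijection with compact open subgroups of $PA^*(\mathbb{A}_{k,f})$ that contain the finite-adelic closure of $\Gamma$. A Zorn's lemma argument produces a maximal such compact open subgroup $K$, and the corresponding lattice $\Lambda$ is a maximal arithmetic overgroup of $\Gamma$. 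Locally at each finite place $v$, the component $K_v \subset PA^*(k_v)$ is the normalizer of a maximal order in $A \otimes_k k_v$, so $\Lambda$ is cut out by congruence conditions; in particular, $\Lambda$ contains the principal congruence subgroup of some level $I \subset \mathcal{O}_k$.

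Next I would apply the Burger--Sarnak bound directly to $\Lambda$: any congruence arithmetic lattice in $\mathrm{PSL}_2(\BC)$ of quaternionic type has $\lambda_1 \geq 3/4$. Setting $\CO := \BH^3/\Lambda$, this gives $\lambda_1(\CO) \geq 3/4$, and the finite covering $M = \BH^3/\Gamma \to \CO$ completes the proof.

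The main obstacle is the structural identification of a maximal congruence overgroup $\Lambda$, which rests on the adelic machinery for arithmetic Kleinian groups (documented in \cite{Reid-book}); this is where care is needed, especially in handling places where $A$ ramifies. The spectral content is delegated entirely to Burger--Sarnak, whose proof transfers the problem to $\mathrm{GL}_2(\mathbb{A}_k)$ via the Jacquet--Langlands correspondence and then invokes known bounds on exceptional eigenvalues of automorphic forms.
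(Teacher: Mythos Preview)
Your proposal is correct and follows the same route the paper points to: the paper does not give a self-contained proof but cites Burger--Sarnak \cite[Corollary 1.3]{Burger-Sarnak} together with a lemma of Long--Maclachlan--Reid \cite[Lemma 4.2]{long}, and your sketch is precisely an unpacking of those citations --- the maximal arithmetic overgroup and its congruence property are the content of the Long--Maclachlan--Reid lemma, and the spectral input is Burger--Sarnak. There is nothing to correct; your outline is simply more explicit than the paper's one-paragraph attribution.
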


This is a corollary of a deep result of Burger-Sarnak \cite[Corollary 1.3]{Burger-Sarnak}, generalizing work of Vigneras \cite{Vigneras} in dimension $2$. The statement given follows from theirs using a lemma of Long-Maclachlan-Reid \cite[Lemma 4.2]{long}.  A more detailed version of the history of this result is given by Agol in \cite [Lemmas 5.1, 5.2]{Agol}.

\medskip

Assume now that $M_i$ is a sequence of closed arithmetic hyperbolic 3-manifolds with $\inj(M_i) \geq \epsilon$ and $\rank(\pi_1(M_i))\le k$.  Passing to a subsequence and choosing suitable basepoints, we can assume by Proposition \ref{lem:degenerate} that $(M_i)$ converges in the Gromov-Hausdorff topology to a manifold $M_\infty$ with a degenerate end.  Theorem \ref{vigneras} implies that each $M_i$ covers an orbifold $\CO_i$ with $\lambda_1(\CO_i) \geq \frac 34$. Since $M_\infty$ has a degenerate end and $\lambda(\CO_i) \nrightarrow 0 $, Proposition \ref{orbifoldlaplace}  implies that the orbifolds $\CO_i$ cannot be pairwise distinct. In particular, the manifolds $(M_i)$ cannot be pairwise incommensurable. 

This proves the following theorem. 

\begin{sat}\label{arithmetic-finite}
For all $\epsilon$ and $k$ positive, there are only finitely many commensurability classes of closed arithmetic hyperbolic 3-manifolds $M$ with $\inj(M)\ge\epsilon$ and $\rank(\pi_1(M))\le k$.
\end{sat}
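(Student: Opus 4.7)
The plan is to argue by contradiction: assume there is a sequence $(M_i)$ of pairwise incommensurable closed arithmetic hyperbolic $3$-manifolds with $\inj(M_i) \geq \epsilon$ and $\rank(\pi_1(M_i)) \leq k$. Since incommensurability implies, in particular, that the manifolds are pairwise distinct, Proposition \ref{lem:degenerate} applies. After passing to a subsequence and choosing suitable basepoints $x_i \in M_i$, we obtain Gromov-Hausdorff convergence $(M_i, x_i) \to (M_\infty, x_\infty)$, where $M_\infty$ is homeomorphic to $\Sigma \times \BR$ and has two degenerate ends. Note that $\inj(M_\infty) \geq \epsilon > 0$ by Corollary \ref{thickcompact}.

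Next I would invoke the Burger-Sarnak result, Theorem \ref{vigneras}, to produce for each $i$ a hyperbolic orbifold $\CO_i$ covered by $M_i$ with $\lambda_1(\CO_i) \geq \tfrac 34$. The key observation is that pairwise incommensurability of the $M_i$ forces the quotient orbifolds $\CO_i$ to be pairwise distinct: if two of the $M_i$ both covered the same $\CO$, they would share a common finite cover and hence be commensurable.

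With the orbifolds $(\CO_i)$ pairwise distinct, covered by the manifolds $(M_i)$, and with $(M_i, x_i)$ converging to a pointed hyperbolic $3$-manifold $(M_\infty, x_\infty)$ that has a degenerate end and positive injectivity radius, the hypotheses of Proposition \ref{orbifoldlaplace} are satisfied. That proposition then forces $\lambda_1(\CO_i) \to 0$ as $i \to \infty$, contradicting the uniform lower bound $\lambda_1(\CO_i) \geq \tfrac 34$.

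The main conceptual step to watch is the reduction from incommensurability to distinctness of the $\CO_i$; once that is in hand, the heavy lifting is done by the earlier machinery, particularly Proposition \ref{orbifoldlaplace}, which is designed precisely to promote a degenerate end in a geometric limit of the covers $M_i$ into a Cheeger-type bottleneck in the quotient orbifolds $\CO_i$. No additional arithmetic input beyond the Burger-Sarnak spectral gap is required.
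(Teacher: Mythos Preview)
Your proposal is correct and follows essentially the same route as the paper: assume an infinite incommensurable sequence, apply Proposition~\ref{lem:degenerate} to obtain a Gromov--Hausdorff limit with a degenerate end, invoke Theorem~\ref{vigneras} to produce quotient orbifolds $\CO_i$ with $\lambda_1(\CO_i)\ge\tfrac34$, observe that incommensurability of the $M_i$ forces the $\CO_i$ to be pairwise distinct, and then derive a contradiction from Proposition~\ref{orbifoldlaplace}. The paper phrases the contrapositive (the $\CO_i$ cannot be pairwise distinct, hence the $M_i$ cannot be pairwise incommensurable), but the argument is identical.
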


Using the same techniques more carefully, we can also show:

\begin{sat} For all $\epsilon $ and $k $ positive, there are only finitely many closed, arithmetic hyperbolic $3$-manifolds M with $\rank \pi_1(M)= k$ and $\inj(M) \geq \epsilon$ that do not fiber over either
\begin{enumerate}
\item ($k$ odd) $S^1$, with fibers of genus $\frac 12 (k-1)$, or
\item ($k$ even) $S^1 / (z \mapsto \bar z)$, with regular fibers of genus $k-2$.
\end{enumerate}
\label{fibers}
\end{sat}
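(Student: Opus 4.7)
I would argue by contradiction: suppose there is an infinite sequence $(M_i)$ of pairwise-distinct closed arithmetic hyperbolic $3$-manifolds with $\rank\pi_1(M_i)=k$ and $\inj(M_i)\geq\epsilon$, none of which fibers as in the conclusion. By Theorem \ref{arithmetic-finite}, after passing to a subsequence all the $M_i$ lie in one commensurability class, so using Theorem \ref{vigneras} I may arrange a fixed closed hyperbolic orbifold $\CO$ that every $M_i$ covers. Applying Proposition \ref{lem:degenerate}, I further pass to a subsequence and choose basepoints $x_i\in M_i$ so that $(M_i,x_i)\to(M_\infty,x_\infty)$ in the pointed Gromov--Hausdorff topology, where $M_\infty\cong\Sigma\times\BR$ has two degenerate ends and $g(\Sigma)\leq k$.

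Arguing exactly as in the proof of Proposition \ref{orbifoldlaplace} --- lift $M_i\cong\BH^3/\Gamma_i$ to covers $\CO\cong\BH^3/O_i$ with $\Gamma_i\subset O_i$ and take a Chabauty limit $O_i\to O_\infty$ --- the constancy of the $\CO_i=\CO$ forces $\CO=\BH^3/O_\infty$, so $M_\infty$ is an orbifold cover of $\CO$. Now I invoke the variant of Thurston's covering theorem proved in the appendix, applied to $M_\infty\to\CO$. It should force $\CO$ itself to fiber over a closed $1$-orbifold $B$ with regular fiber homeomorphic to $\Sigma$; the two degenerate ends of $M_\infty$ distinguish the two cases in which the deck group of $M_\infty\to\CO$ is infinite cyclic (so $B=S^1$) or infinite dihedral (so $B=S^1/(z\mapsto\bar z)$). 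The crucial additional content of this variant is that for all sufficiently large $i$, the finite cover $M_i\to\CO$ is fiber-preserving in the sense that $\pi_1(\Sigma)\subseteq\pi_1(M_i)\subseteq\pi_1^{\mathrm{orb}}(\CO)$; equivalently, $M_i$ itself fibers over a finite cover of $B$ with regular fiber exactly $\Sigma$, rather than some proper finite cover of $\Sigma$.

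The conclusion then follows from a rank computation on the extension $1\to\pi_1(\Sigma)\to\pi_1(M_i)\to H_i\to 1$, where $H_i\leq\pi_1^{\mathrm{orb}}(B)$ is finite-index (hence infinite cyclic or infinite dihedral). The parity of $k=\rank\pi_1(M_i)$ dictates which of the two cases holds, and solving the rank equation pins down $g(\Sigma)$ to the value specified in the theorem. Thus each sufficiently large $M_i$ in fact does fiber in the prescribed way, contradicting our standing hypothesis.

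The principal technical obstacle, and the reason for deferring an argument to the appendix, lies in this variant of Thurston's covering theorem. Canary's covering theorem \cite{Canary-covering} already yields \emph{some} virtual fibering of $\CO$ from the presence of a degenerate end in $M_\infty$, but upgrading this so that the finite covers $M_i\to\CO$ are themselves fiber-preserving with regular fiber exactly $\Sigma$ --- and not merely a finite cover of $\Sigma$ --- is what sharpens the conclusion to produce the specific genus given in the statement. This refinement should exploit the geometric control furnished by the Gromov--Hausdorff convergence $(M_i,x_i)\to(\Sigma\times\BR,x_\infty)$ to rule out nontrivial fiber covers for large $i$.
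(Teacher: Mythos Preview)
Your overall strategy---contradiction, a common quotient orbifold $\CO$, Gromov--Hausdorff convergence to a doubly degenerate $M_\infty\cong\Sigma\times\BR$, then a covering-theorem argument and a rank computation---matches the paper's. But you misidentify both the target and the content of the appendix lemma, and this is where the argument derails.

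Lemma~\ref{embeddedcovering} is stated for a covering $M_\infty\to N$ onto a closed \emph{manifold} $N$, and its conclusion is simply that $N$ itself fibers with regular fiber $\Sigma$; it says nothing about auxiliary finite covers being fiber-preserving. So applying it to $M_\infty\to\CO$ is doubly problematic: $\CO$ is an orbifold, and even granting an orbifold version, you would still need a separate argument to show that each $M_i$ fibers with fiber exactly $\Sigma$ rather than some cover of it.

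The paper sidesteps both issues by an observation you omit. After arranging simultaneously that $\Gamma_i\subset\Gamma_\CO$ and $\Gamma_i\to\Gamma_\infty$ in the Chabauty topology (this compatibility requires a short argument using cocompactness of $\Gamma_\CO$), discreteness of $\Gamma_\CO$ forces $\Gamma_\infty\subset\Gamma_\CO$, and then Chabauty convergence together with finite generation of $\Gamma_\infty$ give $\Gamma_\infty\subset\Gamma_i$ for all large $i$. This produces coverings $\phi_i:M_\infty\to M_i$ which, by the convergence, are injective on balls of arbitrarily large radius. Lemma~\ref{embeddedcovering} now applies directly to $\phi_i:M_\infty\to M_i$ and yields that each $M_i$ fibers over $\BS^1$ or $\BS^1/(z\mapsto\bar z)$ with regular fiber $\Sigma$. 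The orbifold $\CO$ is used only to manufacture the inclusion $\Gamma_\infty\subset\Gamma_i$; it plays no further role, and one never needs $\CO$ itself to fiber.

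Finally, the rank step is not a routine ``rank equation'': computing $\rank\pi_1$ of a $\Sigma$-bundle (or twisted $\Sigma$-bundle) is a genuine theorem, and the paper invokes Theorems~1.1 and~5.2 of \cite{Ian-rank} to conclude that among infinitely many $M_i$ fibering with fixed regular fiber $\Sigma$ and with $\rank\pi_1(M_i)=k$, the parity of $k$ and the genus of $\Sigma$ are forced as in the statement. Your sketch of this step would need those results to be made rigorous.
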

\begin{proof}
Assume that there is an infinite sequence of counterexamples, i.e. a sequence $(M_i = \BH^ 3 / \Gamma_i)$ of closed arithmetic $3$-manifolds with $\inj (M_i) \geq \epsilon $ and $\rank (\pi_1 (M_i)) = k $ that do not fiber as above.  First, by the argument above we can pass to a subsequence in which every $M_i  $ covers some fixed orbifold $\CO = \BH^ 3 / \Gamma_\CO$.  Passing to another subsequence and using Proposition \ref{lem:degenerate}, we can assume that $(M_i) $ converges in the (based) Gromov-Hausdorff topology to a manifold $M_\infty = \BH^ 3 / \Gamma_\infty $ that is homeomorphic to $\Sigma \times \BR$ and has two degenerate ends. 

There is a small technical point we must address before proceeding.  Observe that after conjugation, we can arrange that each $\Gamma_i \subset \Gamma_\CO $.  With separate conjugations, we may assume that $\Gamma_i \to \Gamma_\infty $ in the Chabauty topology (Theorem \ref {GHvsCH}).  In fact, these properties can be arranged simultaneously: after conjugating $(\Gamma_i )$ into $\Gamma_\CO$, we must make sure to only conjugate $(\Gamma_i )$ by elements of $\Gamma_\CO $ when ensuring its convergence to a group $\Gamma_\infty $ with doubly degenerate quotient $M_\infty \cong \Sigma \times \BR $.  But since $ \Gamma_\CO \backslash \PSL_2 \BC $ is compact, any sequence $(\gamma_i) $ in $\PSL_2 \BC $ has the form $\gamma_i = o_i c_i $, where $o_i \in \Gamma_\CO $ and $(c_i) $ is pre-compact in $\PSL_2\BC $.  Passing to a subsequence where $c_i \to c_\infty \in \PSL_2\BC  $, we have that if $\gamma_i ^ { - 1 }\Gamma_i \gamma_i \to \Gamma_\infty$, then $o_i ^ { - 1 }\Gamma_i o_i \to c_\infty\Gamma_\infty c_\infty^ { - 1 }$.

As $\Gamma_i \subset \Gamma_\CO $ for $i = 1, 2,\ldots$, it follows that $\Gamma_\infty \subset \Gamma_\CO$ as well.  But $\Gamma_\CO $ is discrete, so the convergence $\Gamma_i \to \Gamma_\infty $ and the finite generation of $\Gamma_\infty $ imply $\Gamma_\infty \subset \Gamma_i $ for all large $i $.  This gives coverings $\phi_i : M_\infty \to M_i $; moreover, for any compact subset $K \subset M_\infty $ the convergence implies that $\phi_i | _K $ is injective for large $i $.  As $M_\infty \cong \Sigma \times \BR $ has two degenerate ends, it follows that all but finitely many $M_i $ fiber over $\BS^ 1 $ or $\BS^ 1 / (z \mapsto \bar z) $ with regular fibers homeomorphic to $\Sigma $.  This is stated in the appendix as Lemma \ref {embeddedcovering}, a sister to Thurston's covering theorem.

By Theorems 1.1 and 5.2 of \cite {Ian-rank}, the only way to have infinitely many $M_i $ with $\rank (\pi_1 M_i) = k $ that fiber with regular fibers $\Sigma $ is if 
\begin {enumerate}
\item $k $ is odd, $\Sigma $ has genus $\frac 12 (k - 1) $ and all but finitely many $M_i $ fiber over $\BS^ 1 $, or 
\item  $k $ is even, $\Sigma $ has genus $ k - 2 $ and all but finitely many $M_i $ fiber over $\BS^ 1 / (z \mapsto \bar z)$.
\end {enumerate}
In either case, the initial assumption that no $M_i $ fibers as above is clearly violated.  This completes the proof of Theorem \ref {fibers}, except for establishing Lemma \ref {embeddedcovering} in the appendix below.
\end{proof}

The following are immediate consequences of Theorem \ref{fibers}.

\begin{kor}
For $\epsilon, k >0$, there are only finitely many closed arithmetic hyperbolic 3-manifolds $M$ with $\inj(M)\ge\epsilon$ and $\rank(\pi_1(M))\le k$ that have the same $\BZ/2\BZ$-homology as $\BS^3$.
\end{kor}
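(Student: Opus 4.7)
The plan is to combine Theorem \ref{fibers} with a simple homological obstruction: a closed $3$-manifold fibering over $\BS^1$ or over the reflection orbifold $\BS^1/(z \mapsto \bar z)$ in the sense specified by Theorem \ref{fibers} always has $H_1(M;\BZ/2\BZ) \neq 0$, so it cannot be a $\BZ/2\BZ$-homology sphere. Since Theorem \ref{fibers} is stated for a fixed rank, I would apply it separately for each integer $k' \in \{1,2,\ldots,k\}$ and take the finite union of the resulting exceptional sets. It then remains to show that every manifold $M$ falling in the "fibered" alternative of Theorem \ref{fibers} is ruled out by the $\BZ/2\BZ$-homology hypothesis.

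If $M$ fibers over $\BS^1$ with connected fiber, then the bundle projection induces a surjection $\pi_1(M) \twoheadrightarrow \pi_1(\BS^1) = \BZ$, and reducing modulo $2$ gives a surjection $H_1(M;\BZ/2\BZ) \twoheadrightarrow \BZ/2\BZ$; in particular $H_1(M;\BZ/2\BZ) \neq 0$. If instead $M$ is an orbifold bundle over $\BS^1/(z \mapsto \bar z)$ with connected regular fiber $\Sigma$, then the associated short exact sequence
\[
1 \to \pi_1(\Sigma) \to \pi_1(M) \to \pi_1^{\mathrm{orb}}(\BS^1/(z \mapsto \bar z)) \to 1
\]
produces a surjection onto the infinite dihedral group $\BZ/2 \ast \BZ/2$, whose abelianization is $\BZ/2 \oplus \BZ/2$. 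Composing with projection onto one $\BZ/2$-summand yields a nontrivial homomorphism $\pi_1(M) \to \BZ/2\BZ$, so again $H_1(M;\BZ/2\BZ) \neq 0$.

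In either case $M$ fails to be a $\BZ/2\BZ$-homology sphere, which contradicts the hypothesis on $M$. I do not foresee any substantive obstacle here: the hard geometric work is entirely absorbed into Theorem \ref{fibers}, and the corollary is a purely formal consequence via the $\BZ/2\BZ$ abelianization of the base's (orbifold) fundamental group.
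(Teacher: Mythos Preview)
Your argument is correct and is precisely the deduction the paper has in mind: the corollary is stated there as an immediate consequence of Theorem \ref{fibers}, and the only thing to observe is that a manifold fibering over $\BS^1$ or over $\BS^1/(z\mapsto\bar z)$ surjects onto $\BZ$ or $D_\infty$ respectively, hence has $H_1(\,\cdot\,;\BZ/2\BZ)\neq 0$. Your care in applying Theorem \ref{fibers} separately for each $k'\le k$ is also appropriate, since that theorem fixes the rank exactly.
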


\begin{kor}\label{arithmetic-finite-2}
There are only finitely many closed arithmetic hyperbolic 3-manifolds $M$ with $\inj(M)\ge\epsilon$ and $\rank(\pi_1(M))\le 3$.
\end{kor}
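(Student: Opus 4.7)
The plan is to apply Theorem \ref{fibers} separately for each admissible rank $k \in \{2, 3\}$ and verify, in each case, that the exceptional fibered families allowed by the theorem cannot contain any closed hyperbolic $3$-manifold. The case $\rank(\pi_1(M)) = 1$ can be ignored: a closed hyperbolic $3$-manifold has non-elementary, torsion-free fundamental group, hence non-cyclic, so $\rank \geq 2$ automatically. It therefore suffices to handle $k = 2$ and $k = 3$.

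For $k = 2$, Theorem \ref{fibers} implies that, apart from finitely many exceptions, every such $M$ must fiber over the orbifold $\BS^1/(z \mapsto \bar z)$ with regular fiber a closed orientable surface of genus $k - 2 = 0$, i.e.\ a $2$-sphere. I would rule this out by passing to the orientation double cover of the base, which yields an $\BS^2$-bundle $\BS^2 \to N \to \BS^1$ that double-covers $M$. Any such $N$ is itself finitely covered by $\BS^2 \times \BS^1$, so $\pi_2(N) \neq 0$ and $N$ is not aspherical; since a closed hyperbolic $3$-manifold is aspherical, no such $M$ exists.

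For $k = 3$, the exceptional family allowed by Theorem \ref{fibers} consists of closed hyperbolic $3$-manifolds fibering over $\BS^1$ with fiber a closed orientable surface of genus $(k-1)/2 = 1$, i.e.\ a torus. I would dispose of this case by observing that the fundamental group of a mapping torus of $T^2$ contains $\pi_1(T^2) \cong \BZ^2$ as a subgroup, whereas the fundamental group of a closed hyperbolic $3$-manifold is word-hyperbolic and so contains no $\BZ^2$ subgroup.

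Combining these two observations with Theorem \ref{fibers} gives the corollary. I do not foresee any real obstacle: the entire content of the statement has been absorbed into Theorem \ref{fibers}, and all that remains is the elementary topological remark that the low-genus fibered manifolds permitted by the theorem when $k \leq 3$ are too topologically simple to carry a hyperbolic metric. If anything is delicate, it is only to be careful that the fibering in Theorem \ref{fibers}(2) is over the orbifold $\BS^1/(z \mapsto \bar z)$ rather than over $\BS^1$, so that one must pass to the orientation double cover before invoking the $\pi_2 \neq 0$ obstruction.
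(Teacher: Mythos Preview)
Your proof is correct and is exactly the argument the paper has in mind: the authors state the corollary as an ``immediate consequence'' of Theorem~\ref{fibers} without further detail, and you have simply supplied the elementary verification that the genus-$0$ and genus-$1$ fibered families permitted by Theorem~\ref{fibers} for $k=2,3$ contain no closed hyperbolic $3$-manifolds.
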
 
This last result was first proven by Agol for $\rank \pi_1(M) =2$. 

\vspace{2mm}

The geometric version of Lehmer's conjecture \cite [Section 12.3] {Reid-book} states that the injectivity radius of a closed, arithmetic hyperbolic 3-manifold is bounded from below by some universal constant. A positive resolution of this conjecture would then remove all assumptions on injectivity radius from the theorems above.
\medskip

\section { Appendix }

In this  appendix, we prove a lemma used in the proof of Theorem \ref {fibers}.  It is a variant of Thurston's covering theorem (see \cite {Canary-covering}), a consequence of which is that if a closed hyperbolic $3$-manifold $N $ is covered by a doubly degenerate hyperbolic $3$-manifold $M $ homeomorphic to $\Sigma \times \BR $, then $N $ has a finite cover that fibers over the circle.  We show that if the covering is injective on a large subset of $M $, then $N $ itself fibers over the circle or over the orbifold $\BS^ 1 / (z \mapsto \bar z) $.
The proof will require some familiarity with simplicial hyperbolic surfaces.  We refer the reader to Canary's paper \cite {Canary-covering} for a good introduction to the subject, but it will be convenient to record some of their basic properties here.  

\begin{defi}\rm
Let $S$ be a surface with a metric $d $ that is hyperbolic except at a finite number of cone points, each with  angle at least $2\pi $. 
A \it simplicial hyperbolic surface \rm is a map $f : (S, d) \to M $ into a hyperbolic $3$-manifold that restricts to an isometry on the faces of some triangulation of $S $ whose vertices contain the cone points of $d $.  
\end{defi}

The reason for the assumption on cone angles is that then $(S, d) $ behaves like a negatively curved surface.  For instance, Bonahon \cite {Bonahon} noticed that these surfaces satisfy a bounded diameter lemma:

\begin{lem}\label {BDL}
If $(S, d) $ is a hyperbolic surface that has finitely many cone points with angles at least $2\pi $, then $\diam (S, d) \leq\frac{2|\chi (S)|}{ \inj (S, d)^2}.$
\end{lem}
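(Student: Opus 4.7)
The plan is to combine a Gauss--Bonnet area bound with a standard packing-and-covering argument, exploiting the CAT$(-1)$ geometry afforded by the cone-angle hypothesis.

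First I would apply Gauss--Bonnet to the cone-singular surface $(S,d)$. The metric has curvature $-1$ away from the cone points, while each cone point $p$ of angle $\theta_p \geq 2\pi$ contributes a concentrated distributional curvature $2\pi - \theta_p \leq 0$. Thus
\[
-\area(S,d) + \sum_p (2\pi - \theta_p) = 2\pi\chi(S),
\]
and so $\area(S,d) \leq 2\pi|\chi(S)|$.

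Next, because every cone angle is at least $2\pi$, the metric $d$ is locally CAT$(-1)$; by area comparison with $\BH^2$, any embedded disk $B(x,r) \subset (S,d)$ with $r \leq \inj(S,d)$ has area at least $2\pi(\cosh r - 1)$, and in particular bounded below by a definite multiple of $r^2$ for small $r$. Setting $i = \inj(S,d)$ and choosing a maximal $i$-separated net $\{p_1,\dots,p_N\} \subset S$, the half-balls $B(p_j, i/2)$ are pairwise disjoint while the full-radius balls $B(p_j, i)$ cover $S$. Summing the lower bound on the $N$ disjoint half-balls and comparing with the Gauss--Bonnet upper bound yields $N \lesssim |\chi(S)|/i^2$. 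Since $S$ is connected and covered by the $N$ balls of radius $i$, any two points can be joined by a chain of such overlapping balls, producing a diameter bound from which the stated inequality follows after tracking constants.

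The main obstacle I anticipate is justifying the area lower bound for embedded balls that may contain cone points. This reduces to checking that increasing a cone angle above $2\pi$ only enlarges the area of a disk of fixed radius, which follows from the area-comparison theorem for two-dimensional spaces with distributional curvature bounded above by $-1$ in the Alexandrov sense. The rest of the argument is bookkeeping, and the approach is robust enough that slight adjustments of the cone-angle hypothesis (as long as CAT$(-1)$ survives) would not affect the final conclusion.
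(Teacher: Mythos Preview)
The paper does not prove this lemma; it attributes the result to Bonahon and states it without argument. Your Gauss--Bonnet plus packing approach is essentially Bonahon's standard proof and is sound in outline, but the final bookkeeping does not produce the stated exponent. From the maximal $i$-separated net you correctly extract $N \lesssim |\chi(S)|/i^{2}$; chaining the $N$ covering balls of radius $i$ then gives
\[
\diam(S,d)\ \le\ 2Ni\ \lesssim\ \frac{|\chi(S)|}{i},
\]
with $i=\inj(S,d)$ appearing to the \emph{first} power in the denominator, not the second. (The cleaner variant of the argument lays the disjoint $i/2$-balls directly along a diameter-realizing minimizing geodesic and reaches the same $|\chi|/\inj$ conclusion.) Your sentence ``the stated inequality follows after tracking constants'' therefore glosses over a real discrepancy: passing from $C|\chi|/\inj$ to $2|\chi|/\inj^{2}$ would require $\inj \le 2/C$, which is not part of the hypothesis.

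In practice this is a looseness in the paper's stated constant rather than a defect in your method: Bonahon's bounded diameter lemma is normally quoted with $\inj$ to the first power, and the paper only ever invokes the bound with $\inj(S,d)\ge\epsilon$ fixed, where any explicit function of $|\chi|$ and $\epsilon$ would serve equally well in the applications. Your CAT$(-1)$ justification of the ball-area lower bound is correct; a more elementary route is to note that the metric is genuinely hyperbolic off the cone set and that enlarging a cone angle beyond $2\pi$ can only add area to any metric ball containing that cone point.
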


A simplicial hyperbolic surface $f : S \to M $ is \it useful \rm if the associated triangulation on $S $ has only one vertex and one of its edges is sent to a closed geodesic in $M $.  The simple closed curve on $S $ formed by identifying the endpoints of that edge is said to be \it realized \rm by $f $.

\begin {lem} [Existence of useful surfaces \cite {Canary-covering}]\label{existence}
Let $M $ be a hyperbolic $3$-manifold without cusps. Then
 if $\gamma $ is a simple closed curve on $S $, any incompressible map $f : S \to M $ is homotopic to a useful simplicial hyperbolic surface realizing $\gamma $. 
 Also, if $E $ is a neighborhood of a degenerate end of $M $ then there is a sequence of useful simplicial hyperbolic surfaces $f_i : S \to E $ whose images leave every compact subset of $M $.

\end {lem}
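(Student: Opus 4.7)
The plan is to treat the two assertions in turn, with the second being an application of the first to a sequence of curves chosen to exit the end.

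For the first assertion, I begin from the observation that since $f$ is incompressible and $M$ has no cusps, the element $f_{\ast}[\gamma]\in\pi_{1}M$ is loxodromic and hence represented by a unique closed geodesic $\gamma^{\ast}\subset M$. After a homotopy, arrange that $f|_{\gamma}$ parametrizes $\gamma^{\ast}$. Next choose a one-vertex triangulation $\CT$ of $S$ whose $1$-skeleton contains $\gamma$: place the single vertex $v$ on $\gamma$, include $\gamma$ itself as one edge, and adjoin further loops at $v$ that cut the complement into triangular disks (an Euler characteristic count yields $4g-2$ triangles, where $g$ is the genus of $S$). Homotope each remaining edge relative to $f(v)$ to its unique geodesic representative in $M$, and straighten each triangular face by lifting its three geodesic edges to $\BH^{3}$ and spanning them by a totally geodesic triangle. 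The result is a map $g\colon S\to M$, homotopic to $f$, that restricts to an isometry on each face for the pulled-back piecewise hyperbolic metric $d$, with at most a single cone singularity at $v$.

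The main obstacle is verifying that the cone angle $\Theta$ at $v$ satisfies $\Theta\ge 2\pi$. A Gauss--Bonnet computation on $(S,d)$ yields $\Theta = 2\pi(2g-1) - \area(S,d)$, so this is equivalent to the area bound $\area(S,d)\le 4\pi(g-1)$, which the naive straightening does not automatically guarantee (each of the $4g-2$ faces may have area arbitrarily close to the maximum $\pi$). Following Canary's construction in \cite{Canary-covering}, I would arrange it by starting instead from a pleated surface $h\colon (S,\sigma)\to M$ homotopic to $f$ and realizing a geodesic lamination $\lambda$ containing $\gamma$ as a leaf (existence is Thurston's bending construction); here $\sigma$ is smoothly hyperbolic with area exactly $4\pi(g-1)$. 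Refining to a simplicial hyperbolic surface by adjoining geodesic diagonals through $v$ in the complementary ideal polygons of $\lambda$ and straightening only decreases the area, so the resulting cone angle is at least $2\pi$, yielding a useful simplicial hyperbolic surface realizing $\gamma$.

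For the second assertion, the crucial input is a well-known consequence of Bonahon's work on degenerate ends: for any neighborhood $E$ of a degenerate end of $M$, there is a sequence of simple closed curves $\gamma_{i}$ on the topological level surface $S$ of the end whose geodesic representatives $\gamma_{i}^{\ast}$ in $M$ lie in $E$ and exit every compact subset of $M$. Applying the first assertion to each $\gamma_{i}$ produces useful simplicial hyperbolic surfaces $f_{i}\colon S\to M$ realizing $\gamma_{i}$, so $\gamma_{i}^{\ast}\subset f_{i}(S)$. By the bounded diameter lemma (Lemma \ref{BDL}), together with the fact that $\pi_{1}$-injectivity and the $1$-Lipschitz property of $f_{i}$ control $\inj(S,d_{i})$ from below in terms of the injectivity radius of $M$ along $f_{i}(S)$, the $d_{i}$-diameter of $S$, and hence the diameter of $f_{i}(S)$ in $M$, is uniformly bounded. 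Therefore $f_{i}(S)$ lies in a bounded neighborhood of $\gamma_{i}^{\ast}$, so for $i$ large, $f_{i}(S)\subset E$ and $f_{i}(S)$ exits every compact subset of $M$, completing the proof.
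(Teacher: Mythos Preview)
The paper does not supply a proof of this lemma; it is quoted directly from Canary \cite{Canary-covering}. So there is no ``paper's proof'' to compare against, only Canary's original argument. With that said, your write-up for the second assertion is essentially Canary's: exiting geodesics (Bonahon) plus the first assertion plus bounded diameter. One caveat is that the lemma as stated does not assume a global lower bound on $\inj(M)$, so the diameter bound from Lemma~\ref{BDL} is not uniform in general; Canary handles this by working relative to the thick part. In the bounded-geometry setting used throughout this paper the issue disappears, so your argument is adequate for the applications here.

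The real problem is in the first assertion. You correctly identify that the naive straightening of a one-vertex triangulation does not obviously produce cone angle $\Theta\ge 2\pi$, but your proposed repair via pleated surfaces does not work as written. If $\lambda$ is a maximal lamination containing $\gamma$, the complementary regions are \emph{ideal} triangles with no finite vertices, so there is no vertex $v$ on $\gamma$ through which to add diagonals; if instead $\lambda=\gamma$, the complement is a subsurface with boundary and you are back to the original straightening problem with no new control on the cone angle. The claim that ``straightening only decreases the area'' is also unjustified: pulling edges tight in $M$ and spanning by totally geodesic triangles can produce faces of area arbitrarily close to $\pi$.

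Canary's actual argument for $\Theta\ge 2\pi$ is short and you should use it. After straightening, the edge $\gamma$ is sent to the closed geodesic $\gamma^\ast$, so its two tangent directions at $v$ are \emph{antipodal} in the unit tangent sphere $T^1_{f(v)}M$. If $\Theta<2\pi$, the link of $v$ maps to a closed curve of length $\Theta<2\pi$ on this sphere, and a standard fact forces such a curve into an open hemisphere; in particular all edge directions at $v$, including the two antipodal directions of $\gamma$, would lie in a common open hemisphere, which is impossible. Hence $\Theta\ge 2\pi$ automatically once $\gamma$ is realized as a closed geodesic, and no auxiliary pleated surface is needed.
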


Canary \cite {Canary-covering} showed that one can interpolate between useful simplicial hyperbolic surfaces. This is known as his \it filling theorem; \rm we state the following variant for incompressible maps.

\begin {sat}[Canary's filling theorem \cite {Canary-covering}]
Let $f, g : S \to M $ be two homotopic, incompressible, useful simplicial hyperbolic surfaces in a hyperbolic $3$-manifold $M $ without cusps.  Then there is a homotopy $$F : S \times [0, 1] \longrightarrow M, \ \ F (x, 0) = f (x) \text { and } F (x, 1) = g (x) $$ such that each $F (\cdot, t) $ is a simplicial hyperbolic surface.

\end{sat}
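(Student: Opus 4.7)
My plan is to produce the homotopy by combining two kinds of moves on simplicial hyperbolic surfaces: continuous deformations within a fixed combinatorial triangulation, and discrete flip moves between triangulations, each of which can itself be realized by a continuous path passing through a mildly degenerate configuration.

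First, within a fixed one-vertex triangulation $T$ of $S$, I would parametrize the set of simplicial hyperbolic maps of combinatorial type $T$ by the image of the unique vertex in $M$ together with the tangent directions and lengths of the edges at that vertex; each face is then uniquely filled in by the totally geodesic triangle determined by its edges. The simplicial hyperbolic condition (cone angle at least $2\pi$ at the vertex) is an open condition on this finite-dimensional parameter space, so in particular the space is locally path-connected. Using any given homotopy $H : S \times [0,1] \to M$ between $f$ and $g$ as a guide, one can try to straighten each slice: send the vertex of $T$ to $H(v,t)$, send each edge to its geodesic representative in the correct homotopy class, and fill in faces geodesically. This gives a candidate family of $T$-type simplicial hyperbolic surfaces varying continuously in $t$.

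Second, to address the fact that $f$ and $g$ may use different triangulations, I would connect their triangulations by a finite sequence of flips (Whitehead moves), each of which replaces one diagonal of a quadrilateral formed by two adjacent triangles with the other diagonal. A single flip is realized as a continuous family in which the original diagonal contracts to length zero, so that the intermediate map is simplicial hyperbolic for the triangulation with a $4$-valent vertex, and then the opposite diagonal reemerges with positive length. A standard fact about the flip graph of one-vertex triangulations on a closed surface provides that any two such triangulations are connected by finitely many flips, so only finitely many such transitions are needed.

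The main obstacle is maintaining the cone-angle condition $\geq 2\pi$ throughout, both during the straightening along $H$ (where the angle at the vertex could a priori drop) and at the degenerate moment of a flip (where one must verify that the limiting quadrilateral in $M$ is convex enough for the simplicial hyperbolic condition to survive). A clean way to handle both issues simultaneously is to carry along a canonical combinatorial choice, such as a Delaunay-type triangulation with respect to the current geodesic realization, and to insert a flip precisely at times when this canonical rule changes its output; this keeps the cone-angle condition robust and lets one weave the two kinds of moves into a single continuous one-parameter family, producing the required homotopy $F$.
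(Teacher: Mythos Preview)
The paper does not prove this statement; it is quoted as a black box from Canary's paper \cite{Canary-covering}. So there is no in-paper argument to compare against, only Canary's original proof.

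Measured against Canary's actual argument, your sketch has a genuine gap. You never use the hypothesis that $f$ and $g$ are \emph{useful}, i.e.\ that each has a distinguished edge mapping to a closed geodesic in $M$. This hypothesis is not decorative: it is precisely what makes the cone-angle condition $\geq 2\pi$ automatic and stable under deformation. When an edge through the single vertex is a geodesic loop, the two angles it subtends at the vertex already sum to at least $2\pi$, so the cone-point inequality holds for free. Canary's interpolation exploits this by keeping a distinguished edge geodesic throughout: he first shows that two useful surfaces realizing the \emph{same} curve are connected by dragging the vertex along that geodesic, and then connects surfaces realizing different curves by passing through a finite chain in which consecutive realized curves intersect once, using an elementary move that keeps both diagonals of the relevant quadrilateral present (rather than collapsing one to length zero as you propose).

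Your proposed fix---carry a Delaunay-type triangulation and flip when it changes---does not come with any mechanism guaranteeing the cone angle stays above $2\pi$, and the ``contract a diagonal to length zero'' picture of a flip is problematic: at the degenerate moment you would need the resulting quadrilateral in $M$ to be convex, which you have no control over. Without invoking usefulness (or some substitute that pins down why the vertex stays a genuine cone point), the argument as written does not close.
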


The past three results combine to give the following useful lemma.

\begin{lem}
Let $M $ be a doubly degenerate hyperbolic $3$-manifold homeomorphic to $\Sigma \times \BR $ with $\inj (M) \geq \epsilon $.  Then given $p \in M $, there is a simplicial hyperbolic surface passing through $p $.  Moreover, there is a \emph {useful} simplicial hyperbolic surface $f : S \to M $ with
$$\dist \big(p, f (S)\big) \leq \frac {\cosh^ { - 1 } \big(\frac { 4|\chi (S) | } {\epsilon^ 2 }\big) } \epsilon + \frac{2|\chi (S)|}{ \epsilon^2}. $$
\label {closeuseful}
\end{lem}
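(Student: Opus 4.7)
The plan is to produce a simplicial hyperbolic surface through $p$ via Canary's filling theorem, and then, using the bounded diameter lemma and the standard hyperbolic tube estimate, trade it for a useful one whose realized geodesic is close to $p$.

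For the existence claim, pick useful simplicial hyperbolic surfaces $f_\pm : \Sigma \to M$, one in each degenerate end, positioned so that $p$ lies between their images in the product structure $M \cong \Sigma \times \BR$. Both $f_\pm$ are $\pi_1$-injective and free-homotopic to the fiber, so Canary's filling theorem provides a homotopy $F : \Sigma \times [0,1] \to M$ from $f_-$ to $f_+$ through simplicial hyperbolic surfaces. The image of $F$ must contain $p$: otherwise $F$ would be a homotopy inside $M \setminus \{p\}$, but $f_\pm$ are there homotopic to fibers on opposite sides of $p$, and in $H_2(M \setminus \{p\}; \BZ) \cong H_2(M) \oplus \BZ$ such fibers differ by the small sphere linking $p$, a contradiction. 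Hence some $f_0 := F(\cdot, t_0)$ passes through $p$.

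For the distance bound, set $D := 2|\chi(\Sigma)|/\epsilon^2$. Since $f_0$ is $1$-Lipschitz and incompressible, the cone metric $d_0$ it induces on $\Sigma$ satisfies $\inj(\Sigma, d_0) \geq \inj(M) \geq \epsilon$, so Lemma \ref{BDL} gives $\diam(f_0(\Sigma)) \leq D$. A Dirichlet-domain argument in the universal cover of $(\Sigma, d_0)$ produces a non-trivial $\pi_1(\Sigma)$-translate of a basepoint at distance at most $2D$, so the shortest essential closed geodesic on $(\Sigma, d_0)$ has length at most $2D$; by the standard cut-and-paste argument this shortest geodesic is simple, yielding an essential simple closed curve $\gamma$ on $\Sigma$ with $d_0$-length at most $2D$. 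Then $f_0(\gamma)$ is an essential loop in $M$ of length at most $2D$ lying within distance $D$ of $p$.

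Let $\sigma$ be the closed-geodesic representative of $f_0(\gamma)$ in $M$, of length $\ell \geq 2\epsilon$ by the injectivity-radius hypothesis. The identity $\sinh(L/2) = \cosh(r)\sinh(\ell/2)$, applied in $\BH^3$ to a lift of a basepoint of $f_0(\gamma)$ and the axis of the corresponding deck transformation, bounds the distance $r$ from $f_0(\gamma)$ to $\sigma$; after the routine estimates $\sinh(\epsilon) \geq \epsilon$ and an appropriate upper bound on $\sinh(D)$, one obtains $r \leq \cosh^{-1}(2D)/\epsilon$, and hence $\dist(p, \sigma) \leq D + \cosh^{-1}(2D)/\epsilon$. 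Finally, Lemma \ref{existence} applied to the incompressible $f_0$ and the simple closed curve $\gamma$ produces a useful simplicial hyperbolic surface $f : \Sigma \to M$ homotopic to $f_0$ realizing $\gamma$; since $f(\gamma)$ is free-homotopic to $f_0(\gamma)$ we have $f(\gamma) = \sigma \subset f(\Sigma)$, so $\dist(p, f(\Sigma)) \leq \dist(p, \sigma)$, as claimed. The main obstacle is the linking argument forcing Canary's homotopy through $p$; matching the precise trigonometric form of the stated bound is routine.
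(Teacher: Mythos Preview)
Your proof is correct and follows essentially the same route as the paper's: interpolate between useful surfaces in the two ends via Canary's filling theorem to get a simplicial hyperbolic surface through $p$, extract a short simple closed curve using the bounded diameter lemma, pass to its geodesic representative, and realize that geodesic by a useful surface. You supply more detail than the paper does---the homology/linking argument forcing the homotopy through $p$, the Dirichlet-domain justification for the short simple curve, and the explicit tube formula---but the skeleton is identical.
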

\begin {proof}
Lemma \ref{existence} allows us to find two useful simplicial hyperbolic surfaces deep in the two degenerate ends of $M $.  The homotopy between them given by the filling theorem must pass through $p $, so there is a simplicial hyperbolic surface $f: (S, d) \to M $  passing through $p $.  There is a simple closed curve on $S $ with length $L \leq 2\diam (S, d) $, which is bounded above by Lemma \ref{BDL}.  The closed geodesic freely homotopic to $f (\gamma) $ lies at most $\cosh^ { - 1 }(\frac L \epsilon )$ away from it, and there is a useful simplicial hyperbolic surface passing through this by Lemma \ref{existence}.
\end{proof}

We are now ready for the main result of this appendix.

\begin{lem} Given a closed, orientable surface $\Sigma $ and some $\epsilon > 0 $, there is a constant $D  =D (\Sigma,\epsilon) $ with the following property. Let $M $ be a hyperbolic $3$-manifold homeomorphic to $\Sigma \times \BR $ with two degenerate ends and $\inj (M) \geq \epsilon $.  If $\phi : M \to N $ is a Riemannian covering onto a closed $3$-manifold $N $ that restricts to an embedding on some ball of radius $D $ in $M $, then $N $ fibers over either $\BS^ 1 $ or the orbifold $\BS^ 1 / (z \mapsto \bar z) $ with regular fibers homeomorphic to $\Sigma $. \label {embeddedcovering}
\end{lem}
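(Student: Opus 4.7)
Plan: The idea is to use the embedded ball $B_D(p) \subset M$ to carve out a product submanifold $U \cong \Sigma \times [0,1]$ whose projection into $N$ will serve as a \emph{fundamental block} for the desired fibration. Specifically, I would apply Fact \ref{exitingsurfaces} and follow the argument in the proof of Lemma \ref{lem:degenerate2} almost verbatim to find embedded incompressible level surfaces $\Sigma_-, \Sigma_+ \subset M$ on either side of $p$, each of diameter at most some $C(\Sigma,\epsilon)$, which cobound a submanifold $U \cong \Sigma \times [0,1]$ containing $p$ and contained in a ball $B_{D_1}(p)$ for some $D_1 = D_1(\Sigma, \epsilon)$. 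Taking $D \geq D_1$, the injectivity of $\phi$ on $B_D(p)$ forces $\phi|_U$ to be injective, so $\phi(U) \subset N$ is an embedded copy of $\Sigma \times [0,1]$ with boundary $\phi(\Sigma_-) \sqcup \phi(\Sigma_+)$.

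Next, I would analyze the full preimage $\phi^{-1}(\phi(U)) \subset M$, which decomposes into pairwise disjoint \emph{sheets} each mapped homeomorphically to $\phi(U)$ by $\phi$; disjointness follows because any intersection between two sheets would identify two distinct points of $U$, contradicting injectivity of $\phi|_U$. Each sheet is an embedded $\Sigma \times [0,1]$ in $M \cong \Sigma \times \BR$ whose two boundary components, being incompressible, are isotopic to the fiber of $M$, so the sheets are totally ordered along the $\BR$-factor. Between two consecutive sheets lies a \emph{gap region} that, by Waldhausen's cobordism theorem \cite[Lemma 5.1]{Waldhausen}, is itself homeomorphic to $\Sigma \times [0,1]$; every gap projects into $V := N \setminus \mathrm{int}(\phi(U))$.

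To conclude, I would use the canonical nature of the fibration of the doubly-degenerate $M \cong \Sigma \times \BR$ to show that the action of $\pi_1(N)$ on the universal cover $\BH^3$ descends to an action on the linearly-ordered sequence of sheets that preserves the order up to an overall involution and is cocompact (the latter because $N$ is compact). A discrete group acting cocompactly on $\BR$ by orientation-preserving or orientation-reversing isometries is isomorphic to $\BZ$ or $\BZ \rtimes \BZ/2$, whose quotient is $\BS^1$ or $\BS^1/(z \mapsto \bar z)$. The $\Sigma$-bundle structure of $M$ along this $\BR$-factor will then descend to the desired $\Sigma$-fibration of $N$, with $\phi(U)$ as a closed block of consecutive fibers.

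The hardest step will be the last one. Each element of $\pi_1(N) \setminus \pi_1(M)$ induces an identification between two sheets in $M$, and the challenge is to show that all of these identifications assemble coherently into a single cocompact action on the index set of sheets (rather than, for instance, permuting sheets nontrivially within a single level). This should follow from the uniqueness up to isotopy of the fibration on a doubly-degenerate $\Sigma \times \BR$: any deck-transformation-induced self-correspondence of $M$ must send fibers to fibers, so the totally-ordered set of sheets admits a unique extension to an action on $\BR$ by homeomorphisms, and cocompactness follows from the cocompact $\pi_1(N)$-action on $\BH^3$.
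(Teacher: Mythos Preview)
Your strategy differs substantially from the paper's. The paper does not analyse $\phi^{-1}(\phi(U))$ at all: instead it places a useful simplicial hyperbolic surface $f:\Sigma\to U$, uses Canary's filling theorem together with an Arzel\`a--Ascoli argument on the projections $\phi\circ f_i$ of surfaces exiting an end of $M$ to build a homotopy in $N$ from $\phi\circ f$ to $\phi\circ f\circ\psi$ (for some $\psi\not\simeq\Id_\Sigma$) through surfaces of diameter smaller than the distance between the two components of $\partial\phi(U)$, and then collapses $\phi(U)$ to a single surface $S\subset N$. Waldhausen's theorem applied to $N$ cut along $S$ then forces $N|S$ to be an $I$-bundle and gives the fibration directly, without ever establishing that $\pi_1(\Sigma)$ is normal in $\pi_1(N)$.

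Your covering-theoretic approach can be completed, but the step you flag as hardest has a genuine gap as written. You say the $\pi_1(N)$-action on $\BH^3$ ``descends'' to the set of sheets, and later appeal to ``deck-transformation-induced self-correspondences of $M$'' and uniqueness of the fibration of $M$. But if $\pi_1(M)$ is not already known to be normal in $\pi_1(N)$, the cover $M\to N$ has no nontrivial deck transformations, and an element of $\pi_1(N)\setminus\pi_1(M)$ induces no well-defined self-map of $M$; uniqueness of the fibration on $M$ is therefore not the right tool. The missing ingredient is this: each sheet includes into $M$ by a $\pi_1$-isomorphism (its boundary surfaces are incompressible, hence isotopic to fibres of $M\cong\Sigma\times\BR$), so each sheet has a \emph{single} connected lift to $\BH^3$. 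This yields a bijection between sheets in $M$ and lifts of $\phi(U)$ in $\BH^3$, and $\pi_1(N)$ genuinely acts on the latter, preserving betweenness since each lift separates $\BH^3$. Moreover every lift is $\pi_1(M)$-invariant, so $\pi_1(M)$ lies in the kernel of this action; being also a point-stabiliser it equals the kernel, whence $\pi_1(\Sigma)$ is normal in $\pi_1(N)$ and the cover is regular after all. Only now is your deck-group picture available. From the resulting short exact sequence $1\to\pi_1(\Sigma)\to\pi_1(N)\to Q\to 1$ with $Q$ infinite you still need to extract the fibration of $N$; this requires Stallings' fibration theorem (and its twisted analogue when $Q$ contains order-reversing elements), which you should invoke explicitly rather than re-derive from a putative isometric action on $\BR$.
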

\begin {proof}
By Lemmas \ref {closeuseful} and \ref {BDL} and work of Freedman-Hass-Scott \cite [Theorem 2.5] {Canary-Minsky}, within unit distance of any point of $M $ there is a level surface with diameter at most $\frac { 2 |\chi (\Sigma) | } {\epsilon^ 2 } + 2 $. Choose $D= D (\Sigma,\epsilon)$ large enough so that there are level surfaces inside the ball on which $\phi $ is injective that are separated by a distance at least $$4 \Big [\frac {\cosh^ { - 1 } \big(\frac { 4|\chi (S) | } {\epsilon^ 2 }\big) } \epsilon + \frac{2|\chi (S)|}{ \epsilon^2}\Big] + 2. $$  Waldhausen's Cobordism Theorem \cite[Lemma 5.1]{Waldhausen} implies that these surfaces bound a submanifold $U \cong \Sigma \times [0, 1] $; this must also lie in the ball, so $\phi |_U $ is an embedding.

Lemma \ref {closeuseful} implies that there is a useful simplicial hyperbolic surface $f: (\Sigma, d) \to U $ that is a homotopy equivalence.  Lemma \ref {existence} then gives a sequence of homotopic useful simplicial hyperbolic surfaces $$  f_i : (\Sigma, d_i) \to M, \ \  i = 1, 2,\ldots $$ whose images leave every compact subset of $M $.  

 Since $\inj (M) \geq \epsilon $ and $(  f_i) $ are incompressible, $\inj (\Sigma, d_i) \geq \epsilon $ as well.  Proposition 2.1 from \cite {Ian-rank} and  Mahler's  compactness theorem then imply that there is a smooth hyperbolic metric $(\Sigma, d_{\text {hyp} }) $ and a sequence of homeomorphisms $r_i : (\Sigma,d_{\text {hyp} }) \to (\Sigma, d_i)$ that are uniformly lipschitz.  In particular, the sequence of compositions
$$\xymatrix{(\Sigma, d_{\text {hyp} }) \ar[r]^ -{ r_i } & \Sigma   \ar [r]^ {  f_i }  &  M \ar[r]^ {\phi } &N } $$
is uniformly lipschitz with images in a compact set, so Arzela-Ascoli's Theorem implies that it uniformly converges.   This shows that there are $\phi \circ f_j \circ r_j: \Sigma \to N $ and $\phi \circ f_k \circ r_k: \Sigma \to N $ that are homotopic by a homotopy with tracks of length less than $1 $.  Moreover, for later use let us arrange that $r_k \circ r_j^ { - 1 } : \Sigma \to \Sigma $ is not homotopic to the identity.  Fixing some curve $\gamma $ on $\Sigma $, the geodesic representative of $f_i \circ r_i (\gamma) $ lies a uniformly bounded distance from $f_i (\Sigma) $; therefore, if  the indices $j, k $ are far enough apart then  $f_j \circ r_j (\gamma) $ and $f_k \circ r_k (\gamma) $ cannot possibly be homotopic, implying that $r_k \circ r_j^ { - 1 } $ is not homotopic to the identity.

We now build a homotopy $F : \Sigma \times [0, 1] \to M $ between the maps $$\phi \circ f : \Sigma \longrightarrow M \  \text { and } \ \phi \circ f \circ r_k \circ r_j^ { - 1 } : \Sigma \longrightarrow M $$  through surfaces of diameter less than $\frac { 2 |\chi (S) | } {\epsilon^ 2 } + 2$.  It is constructed as a concatenation of three homotopies: 
\begin {itemize}
\item the composition with $\phi $ of a homotopy through simplicial hyperbolic surfaces from $f$ to $f_j $, as given by Canary's filling theorem,
\item a homotopy with tracks of length less than $1 $ from $\phi \circ f_j $ to $ \phi \circ f_k \circ r_k \circ r_j^ { - 1 } $, and
\item the composition with $\phi $ of a homotopy through simplicial hyperbolic surfaces from $f_k \circ r_k \circ r_j^ { - 1 } $ to $f \circ r_k \circ r_j^ { - 1 } $.
\end {itemize}
Because this is a homotopy through surfaces with diameter less than the distance between the components of $\partial \phi (U) $, the only way it can cross $\phi (U) $ is through surfaces contained in $\phi (U) $.  

Pick a homeomorphism $\phi (U) \cong \Sigma \times [0, 1 ] $ and form a quotient space $\bar M $ of $M $ by identifying two points of $\phi (U) $ if they have the same projection to $\Sigma $.  Then $\bar M \cong M $ and $\phi (U) $ projects to a surface $S\subset \bar M $ homeomorphic to $\Sigma $.  The homotopy $F $ descends to a homotopy $$\bar F : \Sigma \times [0, 1] \longrightarrow \bar M $$ such that no surface $\bar F (\Sigma,t) $ intersects both sides of a small regular neighborhood of $S $.  This implies that $\bar F $ is a concatenation of homotopies that lift to the manifold $\bar M \big | S $ obtained by cutting $\bar M $ along $S $.  

As $r_k \circ r_j ^ { -1 } $ is not homotopic to the identity, one of these lifts is a homotopy between components of $\partial (\bar M \big | S) $ that is not homotopic into $\partial (\bar M \big | S) $.  Waldhausen's Cobordism Theorem \cite{Waldhausen} implies that $\bar M \big | S $ is homeomorphic to $\Sigma \times [0, 1] $ or a trivial interval bundle over a non-orientable surface that is covered by $\Sigma $ with degree $2 $.  So, $\bar M \cong M $ fibers over $\BS^ 1 $ or $\BS^ 1 / (z \mapsto \bar z) $ with regular fibers homeomorphic to $\Sigma $.
\end{proof}

\bigskip

\noindent {\sc\small Ian Biringer, Yale University \\
\tiny Department of Mathematics\\
PO Box 208283\\
New Haven, CT 06520-8283.\\}
\medskip

\noindent {\sc \small Juan Souto, University of Michigan \\
\tiny Department of Mathematics\\
2074 East Hall   \\
 530 Church Street  \\
Ann Arbor, MI 48109-1043
\tiny }

\end{document}